\newcommand{\Cyclic}{\mathop{\text{\Large$\mathfrak S$}\vrule width 0pt%
depth 1pt}}
\newcommand{\inp}{\@ifstar{\inpFixed}{\inpScaled}}
\newcommand{\inpScaled}[3][]{\def\@tmp{#1}\def\@tmpd{}%
\ifx\@tmp\@tmpd\def\@tmpl{left}\def\@tmpr{right}%
\else\def\@tmpl{#1l}\def\@tmpr{#1r}\fi \csname \@tmpl \endcsname
\langle #2, #3 \csname \@tmpr \endcsname \rangle}
\newcommand{\norm}{\@ifstar{\normFixed}{\normScaled}}
\newcommand{\normScaled}[2][]{\def\@tmp{#1}\def\@tmpd{}%
\ifx\@tmp\@tmpd\def\@tmpl{left}\def\@tmpr{right}%
\else\def\@tmpl{#1l}\def\@tmpr{#1r}\fi \csname \@tmpl \endcsname
\lVert #2 \csname \@tmpr \endcsname \rVert}
\theoremstyle{plain}
\newtheorem{theorem}{Theorem}[section]
\newtheorem{lemma}[theorem]{Lemma}
\newtheorem{corollary}[theorem]{Corollary}
\newtheorem{proposition}[theorem]{Proposition}
\theoremstyle{definition}
\theoremstyle{remark}
\newtheorem{remark}[theorem]{Remark}
\title{\Large \bf The canonical $8$-form on manifolds\\ with holonomy group $\mathrm{Spin}(9)$
\thanks{MSC 2010:
53C27,  
53C29,  
53C35.  
\newline
\indent \,\, Key words: Canonical $8$-form on a $\mathrm{Spin}(9)$-manifold,
curvature tensor of the
Cayley planes, spin representation of $\mathrm{Spin}(9)$.}}
\author{\large\sc M.\,Castrill\'on L\'opez, P.\,M. Gadea, \& I.\,V. Mykytyuk
\thanks{Partially supported by the Ministry of Science and
Innovation, Spain, under Project MTM2008--01386.}}
\date{}
\begin{document}

\maketitle

\begin{abstract}
An explicit expression of the canonical
$8$-form on a Riemannian manifold with a
$\mathrm{Spin}(9)$-structure, in terms of the nine local symmetric
involutions involved, is given. The list of explicit
expressions of all the canonical
forms related to Berger's list of holonomy groups is
thus completed. Moreover, some results on
$\mathrm{Spin}(9)$-structures as
$G$-structures defined by a tensor and on the curvature tensor of
the Cayley planes, are obtained.
\end{abstract}

\section{Introduction and Preliminaries}
\label{se.1}
\setcounter{equation}{0}

The group
$\mathrm{Spin}(9)$ belongs to Berger's list \cite{Ber1} of
restricted holonomy groups of locally irreducible Riemannian
manifolds which are not locally
symmetric. Manifolds with holonomy group
$\mathrm{Spin}(9)$ have been studied by Alekseevsky \cite{Ale},
Brown and Gray \cite{BroGra}, Friedrich \cite{Fri1,Fri2}, and Lam
\cite{Lam}, among other authors. As proved in \cite{BroGra,Ale},
a connected, simply-connected, complete non-flat
$\mathrm{Spin}(9)$-manifold is isometric
to either the Cayley projective plane
$\operatorname{\mathbb O \mathrm P}(2) \cong F_4/\mathrm{Spin}(9)$
or its dual symmetric space, the Cayley hyperbolic plane
$\operatorname{\mathbb O \mathrm H}(2) \cong F_{4(-20)}/\mathrm{Spin}(9)$.

Moreover, $\Delta_9$ being the unique irreducible $16$-dimensional
$\mathrm{Spin}(9)$-module, the $\mathrm{Spin}(9)$-module
$\Lambda^8(\Delta^*_9)$ contains one
and only one (up to a non-zero factor)
$8$-form $\Omega^8_0$ which is
$\mathrm{Spin}(9)$-invariant and
defines the unique parallel form on
$\operatorname{\mathbb O \mathrm P}(2)$.
It induces a {\it canonical $8$-form\/}
$\Omega^8$ on any
$16$-dimensional manifold with a fixed
$\mathrm{Spin}(9)$-structure. This form is said to be canonical
because (cf.\ \cite[p.\ 48]{BroGra}, Berger \cite[p.\ 13]{Ber2})
it yields, for the compact case, a generator of
$H^8(\operatorname{\mathbb O \mathrm P}(2),\mathbb{R})$.

Some explicit expressions of
$\Omega^8$ have been given. The first one by Brown and Gray in
\cite[p.\ 49]{BroGra} in terms of a Haar integral. Other
expression was then given by Brada and P\'ecaut-Tison~\cite[pp.\
150, 153]{BraPec}, by using a ``cross product.'' Unfortunately,
their formula is not correct, as we explain in Appendix A. Another
expression was then given by Abe and
Matsubara in~\cite[p.\ 8]{AbeMat} as a sum of
$702$ suitable terms (see also Abe \cite{Abe}). Their formula
contains some errors, see Appendix B below.

In this paper we give (Theorem \ref{th.1.1})
an explicit expression of the canonical
$8$-form $\Omega^8$ on a
$\mathrm{Spin}(9)$-manifold, in terms of the nine local symmetric
involutions involved.

On the one hand, this completes the list of canonical forms which
are related to Berger's list of holonomy
groups (for the Kraines form \cite{Kra} for
$\mathrm{Sp}(n)\mathrm{Sp}(1)$ and the Bonan forms \cite{Bon} for
$G_2$ and
$\mathrm{Spin}(7)$ see also, e.g.\ Salamon \cite[pp.\ 126, 155,
173]{Sal}). On the other hand, we furnish
an explicit analogue to the K\"ahler
$2$-form
$\Omega^2$ and quaternion-K\"ahler
$4$-form
$\Omega^4$, which can in a sense be called their octonionic
analogue, as follows.

We recall that a $\mathrm{Spin}(9)$-structure on an connected, oriented
$16$-dimensional Riemannian manifold
$(M,g)$ is defined as a reduction of its bundle of oriented
orthonormal frames $\mathrm{SO}(M)$, via the spin
representation $\rho (\mathrm{Spin}(9))\subset \mathrm{SO}(16)$.
Equivalently (Friedrich \cite{Fri1,Fri2}), a
$\mathrm{Spin}(9)$-structure is given by nine-dimensional subbundle
$\nu^9$ of the bundle of endomorphisms $\mathrm{End}(TM)$ locally
spanned by $I_i \in \Gamma(\nu^9)$, $0 \leqslant i \leqslant 8$,
satisfying the relations $I_i I_j + I_j I_i = 0$, $i \neq j$,
$I^2_i = \mathrm{I}$, $I^T_i = I_i$, $\mathrm{tr}\; I_i = 0$,
$i,j=0,\dots,8$. These endomorphisms define $2$-forms
$\omega_{ij}$, $0\leqslant i < j\leqslant 8$, on $M$ locally by
$\omega_{ij}(X,Y) = g(X,I_i I_j Y)$. Similarly, using the
skew-symmetric involutions $I_i I_j I_k$, $0\leqslant
i<j<k\leqslant 8$, one can define $2$-forms $\sigma_{ijk}$. The
$2$-forms $\{ \omega_{ij}, \sigma_{ijk} \}$ are linearly
independent and a local basis of the bundle $\Lambda^2M$.

The main purpose of the present paper is to prove
\begin{theorem}\label{th.1.1}
The canonical $8$-form on the
$\mathrm{Spin}(9)$-manifold
$(M,g,\nu^9)$ is given by \[ \Omega^8 = \sum_{\substack{
i,j=0,\dots,8\\ i',j'=0,\dots,8}} \omega_{ij} \wedge
\omega_{ij'} \wedge \omega_{i'j} \wedge \omega_{i'j'},\]
where $\omega_{ij}=-\omega_{ji}$ if $i>j$ and $\omega_{ij}=0$
if $i=j$.
\end{theorem}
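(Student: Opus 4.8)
The plan is to exploit the uniqueness stated in the Introduction: since $(\Lambda^{8}\Delta_{9}^{*})^{\mathrm{Spin}(9)}$ is one-dimensional and the canonical form is fixed only up to a non-zero scalar, it suffices to prove that the right-hand side $\Omega^{8}$ is (a)~$\mathrm{Spin}(9)$-invariant and (b)~not identically zero. Both are pointwise assertions, so I would fix $p\in M$, identify $(T_{p}M,g_{p})$ with the spin module $(\Delta_{9},g_{E})=\mathbb{R}^{16}$ and the $I_{i}$ with a fixed Clifford system $I_{0},\dots,I_{8}$ on $\Delta_{9}$. Independence of the chosen admissible frame, i.e.\ well-definedness of $\Omega^{8}$ on $M$, will fall out of the very same computation, because two local bases of $\nu^{9}$ differ by some $A\in\mathrm{SO}(9)$ acting on the index, $I_{i}\mapsto\sum_{a}A_{ia}I_{a}$, and this is exactly the residual gauge freedom one must control.

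First I would record the transformation law of the $2$-forms. Writing $I_{i}I_{j}+I_{j}I_{i}=2\delta_{ij}\mathrm{I}$ and using $\sum_{a}A_{ia}A_{ja}=\delta_{ij}$ (here $A^{T}A=\mathrm{I}$ is forced by the requirement that the new frame again satisfy the Clifford relations), the symmetric scalar part drops out for $i\neq j$, and for the skew matrix $\omega=(\omega_{ij})$ of $2$-forms one obtains the rule $\omega\mapsto A\omega A^{T}$. The crucial observation is that the quartic contraction defining $\Omega^{8}$ is a trace. Since wedge products of $2$-forms commute, I may treat the $\omega_{ij}$ as elements of the commutative ring $\Lambda^{\mathrm{even}}T_{p}^{*}M$ and reorganise the sum as
\[
\Omega^{8}=\sum_{j,j'=0}^{8}B_{jj'}\wedge B_{jj'},\qquad B_{jj'}:=\sum_{i=0}^{8}\omega_{ij}\wedge\omega_{ij'},
\]
so that, with $B=\omega^{T}\wedge\omega$ (a \emph{symmetric} matrix of $4$-forms), $\Omega^{8}=\mathrm{tr}(B\wedge B)$. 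Under $\omega\mapsto A\omega A^{T}$ one gets $B\mapsto ABA^{T}$, whence $\mathrm{tr}(B\wedge B)\mapsto\mathrm{tr}\bigl(A(B\wedge B)A^{T}\bigr)=\mathrm{tr}(B\wedge B)$. The identical manipulation, applied to the conjugation $\rho(g)I_{i}\rho(g)^{-1}=\sum_{a}A_{ia}I_{a}$ with $A=\pi(g)$ (the defining property of the double cover $\pi\colon\mathrm{Spin}(9)\to\mathrm{SO}(9)$) and to the resulting pullback $\rho(g)^{*}\omega_{ij}=(A^{T}\omega A)_{ij}$, yields $\rho(g)^{*}\Omega^{8}=\Omega^{8}$. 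This establishes (a) and, simultaneously, frame-independence.

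The hard part is (b), the non-vanishing, precisely because $\Omega^{8}=\mathrm{tr}(B\wedge B)$ is the trace of a wedge-square and carries no manifest positivity. I would settle it by an explicit evaluation in the octonionic model $\Delta_{9}=\mathbb{O}\oplus\mathbb{O}$, taking $I_{0}$ diagonal and the remaining $I_{i}$ built from octonion multiplications, computing the corresponding $\omega_{ij}$, and exhibiting a single basis $8$-vector (equivalently a single monomial coefficient) on which $\Omega^{8}$ does not vanish; producing and bookkeeping these coefficients is the main technical obstacle. A second, more structural route is to compute the square norm $\langle\Omega^{8},\Omega^{8}\rangle$ directly: the $2$-forms $\{\omega_{ij}\}_{i<j}$ are mutually orthogonal of equal length, since $\langle\omega_{ab},\omega_{cd}\rangle=-\tfrac12\mathrm{tr}(I_{a}I_{b}I_{c}I_{d})$ vanishes unless $\{a,b\}=\{c,d\}$ and equals $8$ when they coincide, and this orthogonality reduces $\langle\Omega^{8},\Omega^{8}\rangle$ to a finite combinatorial sum which one verifies to be strictly positive; it is laborious but avoids committing to an explicit matrix realisation.

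Once (a) and (b) are in hand, uniqueness forces $\Omega^{8}=c\,\Omega^{8}_{0}$ with $c\neq0$, and since the canonical form is defined only up to a non-zero factor this is exactly the content of Theorem~\ref{th.1.1}. Should a definite normalisation be desired, the constant $c$ can be pinned down afterwards by matching the coefficient computed in (b) against the Brown--Gray Haar-integral expression evaluated at the same point.
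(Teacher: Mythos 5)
Your part (a) is correct and genuinely different from the paper's argument, and it is cleaner. Where the paper verifies invariance separately under each generating one-parameter subgroup $M^t_{kl}=\cos t\cdot\mathrm{I}+\sin t\cdot I_kI_l$ via an elaborate bookkeeping in a polynomial ring (the decomposition $F=F_1+F_2+F_4$ and membership in the subring $RD_I$), you use that conjugation by $\rho(g)$ acts on $\mathrm{span}\{I_0,\dots,I_8\}$ through $\pi(g)\in\mathrm{SO}(9)$, so that $\rho(g)^*\omega=A^T\omega A$, and then observe that $\Omega^8=\mathrm{tr}(B\wedge B)$ with $B=\omega^T\wedge\omega$ is manifestly invariant under $\omega\mapsto A\omega A^T$ because the entries lie in a commutative ring. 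This one computation also disposes of frame-independence, which the paper treats by the same conjugation fact but as a separate closing step; your packaging of both into the single trace identity is a real gain.

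Part (b), however, is a genuine gap, and it is exactly where the bulk of the paper's proof lives. Your first route (``exhibit one nonvanishing coefficient; the bookkeeping is the main technical obstacle'') defers rather than supplies the argument: the paper needs Lemmas~\ref{le.2.2}--\ref{le.2.5}, the reduction by octonion automorphisms and right multiplications to four model values $\widetilde W^0$, and a count of non-cancelling permutation classes, to arrive at $\Omega^8_0(X_0,\dots,X_7)=-14\cdot 1440\neq 0$; nothing in your sketch shortcuts this. Your second, ``structural'' route is unsound as stated. Mutual orthogonality of the $\omega_{ij}$ does not control $\langle\Omega^8,\Omega^8\rangle$: for wedge products of $2$-forms there is no Gram-determinant formula (that holds only for $1$-forms), and the inner product of two monomials $\omega_{i_1j_1}\wedge\dots\wedge\omega_{i_4j_4}$ involves higher traces $\mathrm{tr}(I_{i_1j_1}I_{k_1l_1}\cdots)$ that pairwise orthogonality leaves undetermined, so no ``finite combinatorial sum'' in the data you list exists. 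Worse, the implicit positivity heuristic fails in this very setting: by Corollary~\ref{co.2.6} of the paper, $\sum_{0\leqslant i<j\leqslant 8}\omega_{ij}\wedge\omega_{ij}=0$, i.e.\ a sum of wedge squares of mutually orthogonal $2$-forms of equal length can vanish identically. Massive cancellation is therefore a live possibility for $\Omega^8$ as well, and only an honest evaluation on a concrete $8$-vector (the paper's computation, your deferred route) rules it out.
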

On the other hand, some expressions for the curvature tensors of
the Cayley planes have been given (cf.\ Brown and
Gray~\cite{BroGra}, Brada and P\'ecaut-Tison \cite{BraPec0,BraPec},
and~\cite{Myk1,Myk2}). As an application of our Theorem
\ref{th.1.1} we give one expression in terms of the nine local
symmetric operators and relate it to the other expressions.

The importance of the Cayley planes in
geometry is well known. Moreover, both the group
$\mathrm{Spin}(9)$ and the
$\mathrm{Spin}(9)$-structures do appear in some questions of
Physics, and we now recall some of them. The space
$\operatorname{\mathbb O \mathrm H}(2)$ is the only solution to
$N=9$, $d=16$,
$3$-dimensional supergravity (cf.\ de Wit,
Tollst\'en, and Nicolai \cite{WitTolNic}). The group
$\mathrm{Spin}(9)$ appears in M-theory (see
Banks et al.\ \cite{BanFisSheSus}), related to
$16$ fermionic superpartners, transforming as spinors under
$\mathrm{SO}(9)$, linked to the very short strings connecting a
system of D0 branes. Furthermore, Sati
\cite{Sat1,Sat2} has recently studied the relation of
$\mathrm{Spin}(9)$-structures with M-theory fields, proving that
the massless fields of M-theory
are encoded in the spinor bundle of
$\operatorname{\mathbb O \mathrm P}(2)$
and that the massless multiplet of
$11$-dimensional supergravity is related to
$\operatorname{\mathbb O \mathrm P}(2)$ bundles over
eleven-manifolds. In addition, the canonical 8-form
$\Omega ^8$ is there used to define a term of the action
functional given in the theory. We remark
that, besides the theoretical expression of
$\Omega ^8$ given in \cite{BroGra}, the flawed expressions in
\cite{BraPec,AbeMat} are mentioned in \cite{Sat2}.

As for the contents of this paper, in \S
$2$, after recalling some properties of
$\mathrm{Spin}(9)$-manifolds and the nine local symmetric
involutions involved, we obtain the aforementioned expression for
$\Omega^8$ and then some corollaries. In \S
$3$ we apply the previous results to the definition of a
$\mathrm{Spin}(9)$-structure as a structure
defined by a tensor. We deduce in \S
$4$ some results on the curvature tensor of the Cayley planes.
Finally, the aforementioned appendices A and B follow.

\section{The canonical
$8$-form in terms of the nine local symmetric involutions}
\setcounter{equation}{0}

In order to prove Theorem \ref{th.1.1},
we first study the action of the group
$\mathrm{Spin}(9)$ on
$\mathbb{R}^{16} \equiv \mathbb{O}^2$ in
terms of the nine local symmetric involutions
$I_i$.

\subsection{The action of
$\mathrm{Spin}(9)$ on
$\mathbb{R}^{16} \equiv \mathbb{O}^2$}

The isotropy representation of either
$\operatorname{\mathbb O \mathrm P}(2)$ or
$\operatorname{\mathbb O \mathrm H}(2)$
is known to be isomorphic to the
$16$-dimensional spin representation
$\rho$ of
$\mathrm{Spin}(9)$.

Let
$V^9$ be a real vector space of dimension nine
endowed with a positive definite bilinear form
$Q$. Let
$e_0,\dotsc,e_8$ be an orthonormal basis of
$V^9$. The Clifford algebra
$\mathrm{Cl}_+(9)$ in terms of this basis is
defined as the real associative algebra with unit
$1$, generators
$e_0,\dotsc,e_8$, and defining relations
\[ e_i \cdot e_j + e_j
\cdot e_i = 0, \quad i\neq j, \qquad e^2_i
= 1, \quad i,j=0,\dots,8.
\]
Let
$\mathrm{Pin}_+(9)$ be the multiplicative subgroup
of the group of all the invertible elements of
$\mathrm{Cl}_+(9)$ generated by the vectors of length one in
$V^9$. If $Q(v,v) = 1$ then
$v \cdot v=1$, so
$v\in \mathrm{Pin}_+(9)$. The Lie group
$\mathrm{Spin}_+(9)$, which we denote simply by
$\mathrm{Spin}(9)$, as they are isomorphic (cf.\ Postnikov
\cite[Lect.\ 13, Rem.\ 2]{Pos}), is the subgroup of
$\mathrm{Pin}_+(9)$ consisting of even elements, i.e. \[
\mathrm{Spin}(9) = \{\, v_1 \cdot v_2 \cdot \ldots \cdot v_{2k},\;
Q(v_i,v_i)=1, \; i=1,\dotsc,2k, \; k \in
\mathbb{N} \,\}. \] Moreover, the group
$\mathrm{Spin}(9)$ preserves under conjugation the space
$V^9$, that is, $sV^9s^{-1}=V^9$ for all
$s \in \mathrm{Spin}(9)$ (cf.\
\cite[Lect.\ 13]{Pos}). We denote by
$\pi$ the corresponding representation of the group
$\mathrm{Spin}(9)$ on $V^9$. Then
$\pi(\mathrm{Spin}(9))= \mathrm{SO}(9)$ and
$\pi\colon \mathrm{Spin}(9) \to \mathrm{SO}(9)$ is the usual
two-fold covering homomorphism (cf.\ \cite[Lect.\ 13]{Pos}).

There exists a faithful representation $\rho$ of
$\mathrm{Pin}_+(9)$ by orthogonal matrices (cf.\ \cite[Lect.\
13]{Pos}). In other words, $\rho(\mathrm{Pin}_+(9))\subset
\mathrm{O}(16)$ and $\rho(\mathrm{Spin}(9))\subset
\mathrm{SO}(16)$. Therefore, there exist nine orthogonal linear
transformations $I_i$ of $\Delta_9 = \mathbb{R}^{16}$ satisfying
the relations
\begin{equation}\label{eq.2.1}
I_i I_j + I_j I_i = 0, \; i \neq j,\quad I^2_i = \mathrm{I}, \quad
I^T_i = I_i, \quad \mathrm{tr}\; I_i = 0, \qquad i,j=0,\dots,8.
\end{equation}

The set
$\{I_iI_j, 0\leqslant i<j\leqslant 8\}$
is a basis of the Lie algebra
$\rho_*(\mathfrak{spin}(9))\subset\mathfrak{so}(16)$. Indeed,
since
$$ [I_iI_j, I_k]  =\left\{
\begin{array}{ll}
0, & \text{if}\ k\not= i,j, \\
-2I_j, & \text{if}\ k=i, \\
2I_i, & \text{if}\ k=j,
\end{array}
\right.
$$
the operators
$I_iI_j$ are linearly independent and
generate a space of dimension equal to
$\dim\mathfrak{so}(9)$. Taking into account that each operator
$I_iI_j$ is the tangent vector at
$t=0$ to the curve
$$
s(t) = \bigl( \cos (t/2) I_i-\sin (t/2) I_j\bigr)\bigl(\cos (t/2)
I_i+\sin (t/2) I_j\bigr) =\cos t \cdot \mathrm{I} +\sin t \cdot
I_iI_j
$$
in
$\rho(\mathrm{Spin}(9))$ passing through
the identity I, we obtain that the operators
$I_iI_j$ generate the Lie algebra
$\rho_*(\mathfrak{spin}(9))$ and, consequently,
by the connectedness of the Lie group
$\mathrm{Spin}(9)$ the following proposition holds

\begin{proposition}\label{pr.2.1}
The Lie group
$\rho(\mathrm{Spin}(9))\subset \mathrm{SO}(16)$ is generated by
the one-parameter families of endomorphisms \[ \exp(t I_iI_j)=\cos
t \cdot \mathrm{I} + \sin t\cdot I_iI_j, \qquad 0\leqslant
i<j\leqslant 8, \quad t\in\mathbb{R}. \]
\end{proposition}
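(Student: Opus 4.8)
The plan is to reduce the statement to the standard Lie-theoretic principle that the image of a connected Lie group under a representation is generated by the one-parameter subgroups attached to a basis of its Lie algebra, together with a verification that the exponentials in question have the claimed closed form. Two things must be checked: the explicit formula $\exp(tI_iI_j) = \cos t \cdot \mathrm{I} + \sin t \cdot I_iI_j$, and the fact that these curves genuinely lie inside $\rho(\mathrm{Spin}(9))$.

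First I would establish the closed form. Using the relations \eqref{eq.2.1}, a direct computation gives, for $i \neq j$,
\[ (I_iI_j)^2 = I_i(I_jI_i)I_j = -I_i^2I_j^2 = -\mathrm{I}, \]
since $I_jI_i = -I_iI_j$ and $I_i^2 = I_j^2 = \mathrm{I}$. Thus $I_iI_j$ acts as a complex structure on $\Delta_9 = \mathbb{R}^{16}$, and separating the exponential series into even and odd powers yields $\exp(tI_iI_j) = \cos t \cdot \mathrm{I} + \sin t \cdot I_iI_j$, which is exactly the curve $s(t)$ displayed before the statement. To see that this curve lies in the group, I would observe that the vectors $v_\pm = \cos(t/2)\,e_i \pm \sin(t/2)\,e_j$ satisfy $Q(v_\pm, v_\pm) = 1$ and hence belong to $\mathrm{Pin}_+(9)$; their product $v_- \cdot v_+$ is an even element and so lies in $\mathrm{Spin}(9)$. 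Since $\rho$ is a homomorphism with $\rho(e_k) = I_k$, we have $\rho(v_\pm) = \cos(t/2)I_i \pm \sin(t/2)I_j$, and the factorisation $s(t) = \rho(v_-)\,\rho(v_+) = \rho(v_- \cdot v_+)$ places $s(t)$ in $\rho(\mathrm{Spin}(9))$, with derivative $I_iI_j \in \rho_*(\mathfrak{spin}(9))$ at $t=0$.

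Finally I would invoke the bracket relations recorded above. Since $[I_iI_j, I_k]$ vanishes for $k \neq i,j$ and equals $-2I_j$ when $k=i$ and $2I_i$ when $k=j$, and since the $I_k$ are themselves linearly independent, these relations force the $\binom{9}{2} = 36$ operators $I_iI_j$ with $i<j$ to be linearly independent; being $36$ in number they form a basis of the $36$-dimensional Lie algebra $\rho_*(\mathfrak{spin}(9)) \cong \mathfrak{so}(9)$. By the cited principle the one-parameter subgroups $\exp(tI_iI_j)$ then generate the identity component of $\rho(\mathrm{Spin}(9))$; and because $\mathrm{Spin}(9)$ is connected its continuous image $\rho(\mathrm{Spin}(9))$ is connected, so it coincides with that identity component. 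The one genuinely non-routine point is this last passage from ``the $I_iI_j$ span the Lie algebra'' to ``their one-parameter subgroups generate the whole group,'' which is exactly where the connectedness of $\mathrm{Spin}(9)$ becomes indispensable.
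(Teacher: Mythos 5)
Your proof is correct and takes essentially the same route as the paper: the bracket relations $[I_iI_j,I_k]$ give linear independence and the dimension count, the factorisation $\exp(tI_iI_j)=\bigl(\cos(t/2)I_i-\sin(t/2)I_j\bigr)\bigl(\cos(t/2)I_i+\sin(t/2)I_j\bigr)$ exhibits each curve as the image under $\rho$ of a product of two unit vectors and hence places it inside $\rho(\mathrm{Spin}(9))$, and the connectedness of $\mathrm{Spin}(9)$ finishes the argument. Your only additions --- computing $(I_iI_j)^2=-\mathrm{I}$ to justify the closed form of the exponential, and naming the unit vectors $v_\pm\in V^9$ explicitly --- merely make explicit what the paper leaves implicit.
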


In the sequel, we shall denote
$I_iI_j$ simply by
$I_{ij}$ and so on.

Let $(M,g,\nu^9)$ be a $\mathrm{Spin}(9)$-manifold, $p\in M$ and
$I_i,0\leqslant i \leqslant 8,$ a local basis of sections of $\nu
^9$ around $p$ satisfying the relations \eqref{eq.2.1}. Then, there
exists an isomorphism between $\mathbb{O}^2 \equiv \mathbb{R}^{16}$
and $T_p M$ such that the restriction of $g$ at $p\in M$ induces
the standard scalar product $\inp{\cdot}{\cdot}$ of $\mathbb{O}^2$,
given by
\begin{equation}\label{eq.2.2}
\inp{(x_1,x_2)}{(y_1,y_2)} = \inp{x_1}{y_1} + \inp{x_2}{y_2},
\quad \inp{x_a}{y_a}=\frac12(x_a\bar y_a+y_a\bar x_a),
\end{equation}
for $a=1,2$, and the endomorphisms $I_0,\dotsc,I_8$ of
$\mathbb{O}^2\equiv T_p M$ read
\begin{equation}\label{eq.2.3}
I_i(x_1,x_2) = (u_i\bar{x}_2,\bar{x}_1u_i), \quad I_8(x_1,x_2) =
(-x_1,x_2), \qquad (x_1,x_2)\in\mathbb{O}^2,
\end{equation}
where $u_0=1\in\mathbb{O}$ and $u_i$, $i=1,\dotsc,7$, stand for the
imaginary units of $\mathbb{O}$. One can easily check that these
endomorphisms satisfy the appropriate relations~\eqref{eq.2.1} (see
Postnikov \cite[Lect.\ 15]{Pos} and~\cite[(3),(4)]{Myk2}).

Moreover, as seen in Proposition \ref{pr.2.1}, the group
$\rho(\mathrm{Spin}(9))$ acting on $\mathbb{R}^{16} \equiv
\mathbb{O}^2$ is generated by the endomorphisms $M_{kl}^t=\cos t
\cdot \mathrm{I} +\sin t\cdot I_{kl}$, for $0\leqslant k<l\leqslant
8$, and it is a subgroup of the group $\mathrm{SO}(16)$ determined
by the standard scalar product \eqref{eq.2.2} of $\mathbb{O}^2$.

\subsection{Proof of Theorem \ref{th.1.1}}

We must prove that the $8$-form
$\Omega^8_0=\Omega^8|T_pM$, for an arbitrarily fixed point
$p\in M$, is
$\mathrm{Spin}(9)$-invariant and non-trivial.

{\it The $8$-form $\Omega^8_0$ is $\mathrm{Spin}(9)$-invariant.}
Fix a pair $kl$, $0\leqslant k<l\leqslant8$ and consider the action
of the endomorphism $M^t_{kl}$ on the set of forms
$\{\varpi_{ij}=\omega_{ij}|{T_pM},i,j=0,\dots,8\}$.
Remark that $\varpi_{ij}=0$ if $i=j$. Denote by $\overline{D}$ the
set of all the ordered pairs $ij$, where $i,j=0,\dots,8$
and $i\not=j$. We will call a subset $r_i=\{i'j'\in\overline{D}
: i'=i\}$ of the set $\overline{D}$ (resp.\ $c_j=\{i'j'
\in\overline{D} : j'=j\})$ an $i$-row (resp.\ a $j$-column). We
also consider the short $k$-row $r^*_k=r_k\setminus\{kl\}$ and the
short $k$-column $c^*_k=c_k\setminus\{lk\}$ (this time
$\#(c^*_k)=\#(r^*_k)=7$). Similarly one determines the short
$l$-row and the short $l$-column. Put
\begin{align*}
A_0 &= \{ij\in \overline{D} : \{k,l\}\cap\{i,j\}=\emptyset\}, \\
A_2 &= \bigl\{ ij\in \overline{D} :
\{k,l\}\cap\{i,j\}=\{k,l\}\bigr\} = \{kl, lk\}, \\ A_1^+ &=
\{ij\in \overline{D} : \{k,l\}\cap\{i,j\}=\{k\}\} = r^*_k\sqcup
c^*_k,\\ A_1^- &= \bigl\{ ij\in \overline{D} :
\{k,l\}\cap\{i,j\}=\{l\} \bigr\} = r^*_l\sqcup c^*_l,\\ P_{kl}&=
r_k\cup c_k\cup r_l\cup c_l = A_1^+\sqcup A_1^-\sqcup A_2,
\end{align*}
where we denote the union of two sets $A$ and $B$ by $A\sqcup B$ if
$A\cap B=\emptyset$. It is clear that $\overline{D}=A_0\sqcup
A_2\sqcup A_1^+\sqcup A_1^-$. Given a pair $ij\in \overline{D}$, we
denote by $\widehat{ij}$ the new pair obtained by replacing the
element $k$ (if it occurs in $ij$) by $l$ and the element $l$ (if
it occurs in $ij$) by $k$. The correspondence $ij\mapsto
\widehat{ij}$ defines a bijection $\mu\colon
\overline{D}\to\overline{D}$. It is clear that
$\mu(A_1^\pm)=A_1^\mp$ and this mapping is an involutive
automorphism of the set $\overline{D}$. In particular,
$\widehat{ij}=ij$ for $ij\in A_0$ and $\widehat{kl}=lk$.

By definition, for arbitrary
$X,Y\in\mathbb{O}^2$, we have \[
\bigl((M^t_{kl})^*\varpi_{ij}\bigr)(X,Y) = \inp{(\cos t+\sin t
\cdot I_{kl}) X}{(\cos t+\epsilon\sin
t\cdot I_{kl})I_{ij}Y}, \] where
$\epsilon=1$ if the number of common elements in the sets
$\{i,j\}$ and $\{k,l\}$ is even, and
$\epsilon=-1$ if it is odd. Taking
into account that all the operators
$I_i$ are orthogonal and that the operator
$I_{kl}$ is skew-symmetric, it is easily seen that
\begin{equation}\label{eq.2.4}
(M^t_{kl})^*\varpi_{ij}= \left\{
\begin{array}{ll}
\varpi_{ij}, &\text{if}\quad ij \in A_0\cup A_2,\\ \cos 2t\cdot
\varpi_{ij}+\sin 2t\cdot \varpi_{\widehat{ij}}, & \text{if}\quad
ij\in A_1^+,\\ \cos 2t\cdot  \varpi_{ij}-\sin 2t\cdot
\varpi_{\widehat{ij}}, & \text{if}\quad ij\in A_1^-.
\end{array}
\right.
\end{equation}
Therefore, for all
$ij, i'j'\in A_1^+$ we obtain
\begin{align}
(M^t_{kl})^*(\varpi_{ij}\wedge\varpi_{i'j'}+\varpi_{\widehat{ij}}
\wedge \varpi_{\widehat{i'j'}}) & =
\varpi_{ij}\wedge\varpi_{i'j'}+\varpi_{\widehat{ij}}\wedge
\varpi_{\widehat{i'j'}}, \label{eq.2.5} \\
(M^t_{kl})^*(\varpi_{ij}\wedge\varpi_{\widehat{i'j'}}-
\varpi_{\widehat{ij}}\wedge \varpi_{i'j'}) & =\varpi_{ij}\wedge
\varpi_{\widehat{i'j'}}- \varpi_{\widehat{ij}} \wedge
\varpi_{i'j'}. \notag
\end{align}

Consider now the commutative polynomial ring
$RD=\mathbb{R}[x_{ij}; ij\in\overline{D}, i<j]$. Put
$x_{ij}=-x_{ji}$ for $i>j$ and
$x_{ii}=0$. Denote by $RD_I$ the subring of
$RD$ generated by the family of polynomial functions
\begin{align*}
X_I & = \{x_{ij} : ij\in A_0\cup A_2 \} \\ & \quad\; \cup \{
x_{ij}x_{i'j'}+x_{\widehat{ij}}x_{\widehat{i'j'}},\,
x_{ij}x_{\widehat{i'j'}}-x_{\widehat{ij}}x_{i'j'} : ij, i'j'\in
A_1^+\}.
\end{align*}

Since all the 2-forms
$\varpi_{ij}$ commute,
$\Omega^8_0$ is invariant with respect to the one-parameter group
$M^t_{kl}$ if the polynomial function
$F=\sum_{ij,i'j'\in\overline{D}} x_{ij}x_{ij'}x_{i'j}x_{i'j'}$
is an element of the subring
$RD_I$. To prove this fact, note that the sequence
${ij},{ij'},{i'j},{i'j'}\in\overline{D}$ is a sequence of vertices
of either a rectangle or a degenerate
rectangle made of entries of a square
$9\times 9$ matrix without the diagonal.
This sequence originates an either $4$- or
$2$- or $1$-element subset of
$\overline{D}$. So it is natural to consider the following sets:
\begin{align*}
\overline{D}_4 &= \bigl\{ \{ij, ij',i'j, i'j'\}
\subset\overline{D}: i\not=i',\  j\not=j'\bigr\}, \\
\noalign{\smallskip}
\overline{D}_2 &= \bigl\{ \{ij, i'j'\}\subset\overline{D}:
i=i'\ \text{or}\  j=j',\  ij\not=i'j' \bigr\}.
\end{align*}
Using these sets we can rewrite the polynomial
$F$ as a sum
$F=F_1+F_2+F_4$ of three polynomials
\begin{equation}\label{eq.2.6}
F=\sum_{ij\in\overline{D}}x_{ij}^4+
2\sum_{\{ij,i'j'\}\in\overline{D}_2} x_{ij}^2x_{i'j'}^2+
4\sum_{\{ij,ij',i'j, i'j'\}\in\overline{D}_4}
x_{ij}x_{ij'}x_{i'j}x_{i'j'}.
\end{equation}

Consider the polynomial
$F_1+F_2$. Using the decomposition
$\overline{D}=A_0\sqcup A_2\sqcup A_1^+\sqcup A_1^-$,
we can write the first polynomial
$F_1$ as a sum
$F_1= F_{1,0}+F_{1,2}+F_{1,1}^{+}+F_{1,1}^-$ (replacing the set
$\overline{D}$ in the formula for $F_1$ by
$A_0$, $A_2$, $A_1^+$ and
$A_1^-$, respectively). We also consider the decomposition
${\overline{D}}_2=\overline{D}_{2,0}\sqcup\overline{D}_{2,1}\sqcup\overline{D}_{2,2}$
of the set $\overline{D}_2$, where each
$\{ij,i'j'\}\in\overline{D}_{2,\alpha}$ has
$\alpha$ common elements with the subset
$P_{kl}\subset\overline{D}$; and the corresponding decomposition
$F_2=F_{2,0}+F_{2,1}+F_{2,2}$ of
$F_2$.

By definition,
$F_{1,0}+F_{1,2}\in RD_I$. Since
$A_1^{\pm}\subset P_{kl}$, we have
$F_{2,0}\in RD_I$. Taking into account that in any pair
$\{ij,i'j'\}\in\overline{D}_{2,1}$
one element belongs to the subset
$A_0\subset\overline{D}$ and the other to the subset
$A_1^+\sqcup A_1^-$ (i.e. either
$\widehat{ij}=ij$ or
$\widehat{i'j'}=i'j'$), we conclude that
$$
F_{2,1}=2\sum_{\{ij,i'j'\}\in\overline{D}_{2,1}}
x_{ij}^2x_{i'j'}^2=\sum_{\{ij,i'j'\}\in\overline{D}_{2,1}}
(x_{ij}^2x_{i'j'}^2+ x_{\widehat{ij}}^2x_{\widehat{i'j'}}^2)\in
RD_I.
$$
Taking into account that
$x_{ij}^2=x_{ji}^2$ and
$P_{kl}=r^*_k\sqcup c^*_k\sqcup r^*_l\sqcup c^*_l\sqcup A_2$,
we can rewrite the polynomial
$F_{2,2}$ as
$$
4\sum_{j,j'\not\in\{k,l\},
j<j'}(x_{kj}^2x_{kj'}^2+x_{lj}^2x_{lj'}^2)
+4\sum_{j\not\in\{k,l\}}x_{kl}^2(x_{kj}^2+x_{lj}^2)
+4\sum_{j\not\in\{k,l\}}x_{kj}^2x_{lj}^2.
$$
But
$$
F_{1,1}^+ + F_{1,1}^-= \sum_{ij\in
A_1^+}(x_{ij}^4+x_{\widehat{ij}}^4)=
2\sum_{j\not\in\{k,l\}}(x_{kj}^4+x_{lj}^4).
$$
Therefore the component
$$
4\sum_{j\not\in\{k,l\}}x_{kl}^2(x_{kj}^2+x_{lj}^2)
+4\sum_{j\not\in\{k,l\}}x_{kj}^2x_{lj}^2
+2\sum_{j\not\in\{k,l\}}(x_{kj}^4+x_{lj}^4)
$$
of the polynomial
$F_{1,1}^+ +F_{1,1}^-+F_{2,2}$ is an element of
$RD_I$ because
$\widehat{kj}=lj$ and, consequently,
$x_{kl}$,
$(x_{kj}^2+x_{lj}^2)\in RD_I$.

Denote the first term (polynomial) in the above expression of
$F_{2,2}$ as
$F_2^*$. It only remains to be proved that
$F_2^*+F_4\in RD_I$.

For any pair
$ij, i'j'\in\overline{D}$ with
$\{i,j\}\cap\{i',j'\}=\emptyset$ denote by
$q(ij, i'j')$ the quadruple
$\{ij, i'j,ij'\!,i'j'\!\}$. It is clear that
$q(ij,i'j')=q(i'j', ij)$ and
$q(ij, i'j')=q(ij', i'j)$. Moreover, the involution
$\mu$ on $\overline{D}$ induces on the set
$\overline{D}_4$ a well-defined involution
$\mu_4\colon \{ij, i'j, ij', i'j'\} \mapsto\{\widehat{ij},\widehat{i'j},
\widehat{ij'},\widehat{i'j'}\}$
(it is easy to verify that the image of
the rectangle is a rectangle). In particular,
$q(ij,i'j')\mapsto q(\widehat{ij},\widehat{i'j'})$.

Taking into account that the set
$P_{kl}=A_1^+\sqcup A_1^-\sqcup A_2$ is a
union of two rows and two columns, any quadruple
$q\in \overline{D}_4$ has either zero, or
two, or three or four common points with the set
$P_{kl}$. Denote the corresponding subsets of
$\overline{D}_4$ by $\overline{D}_{4,0}$,
$\overline{D}_{4,2}$,
$\overline{D}_{4,3}$,
$\overline{D}_{4,4}$, respectively. Then
$F_4=F_{4,0}+F_{4,2}+F_{4,3}+F_{4,4}$, where the polynomial
$F_{4,\alpha}$ corresponds to the subset
$\overline{D}_{4,\alpha}\subset\overline{D}_4$,
$\alpha=0,2,3,4$. We claim that
$F_{4,0}+F_{4,2}+F_{4,3}\in RD_I$.
To prove this fact, consider the sets
$\overline{D}_{4,0}$,
$\overline{D}_{4,2}$,
$\overline{D}_{4,3}$ in more detail.

If $q\in \overline{D}_{4,0}$ then the four elements of
$q$ belong to the set $A_0$, i.e.\
$F_{2,0}\in RD_I$. If
$q\in \overline{D}_{4,2}$
then two elements of $q$ belong to
$A_0$ and two elements of $q$ belong to
$A_1^+$ or $A_1^-$, i.e.\ the set
$\overline{D}_{4,2}$ is invariant under
the natural action of the involution
$\mu_4$ on
$\overline{D}_{4}$ and a fixed point set for this action on
$\overline{D}_{4,2}$ is empty. Therefore
$$
F_{4,2}=2 \sum_{\{ij, ij', i'j, i'j'\}\in\overline{D}_{4,2}}
(x_{ij}x_{ij'}x_{i'j}x_{i'j'}+ x_{\widehat{ij}}x_{\widehat{ij'}}
x_{\widehat{i'j}}x_{\widehat{i'j'}})\in RD_I.
$$
In the third case, each quadruple
$q\in \overline{D}_{4,3}$ contains
precisely one element of the set $A_2$, so
$\overline{D}_{4,3}=\overline{D}_{4,3}^{kl}\sqcup \overline{D}_{4,3}^{lk}$,
where $\overline{D}_{4,3}^{kl}$ (resp.
$\overline{D}_{4,3}^{lk}$) is the set of all elements from
$\overline{D}_{4,3}$ containing the pair
$kl$ (resp.
$lk$). This decomposition of the set
$\overline{D}_{4,3}$ determines the decomposition
$F_{4,3}=F_{4,3}^{kl}+F_{4,3}^{lk}$ of the polynomial
$F_{4,3}$. Each quadruple
$q\in \overline{D}_{4,3}^{kl}$ is uniquely defined by the pair
$\{kl,ij\}$, i.e.\ by some element
$ij\in\overline{D}$, so
$q=\{kl,kj,il,ij\}$. It is clear that in this element,
$ij\in A_0$ whilst $kj\in A_1^+$ and
$il\in A_1^-$. But for an arbitrary
$ij\in\overline{D}$ the quadruple
$q(kl,ij)=\{kl,kj,il,ij\}$ belongs to
$\overline{D}_4$ if and only if
$\{k,l\}\cap\{i,j\}=\emptyset$. Since this unique relation
defining the quadruples is invariant under interchange of
$i$ and $j$, the quadruple
$q(kl,ij)\in\overline{D}_4$ if and only if
$q(kl,ji)=\{kl,ki,jl,ji\}\in\overline{D}_4$.
Therefore the correspondence
$q(kl,ij)\mapsto q(kl,ji)$ determines
an involutive automorphism on the set
$\overline{D}_{4,3}^{kl}$. Taking into account that
$x_{ji}=-x_{ij}$ and
$q(kl,ij)\not=q(kl,ji)$ we obtain that
\[
F_{4,3}^{kl} = 2
\sum_{\substack{ij\in\overline{D}\\ i,j\not\in\{k,l\}}}
x_{kl}(x_{kj}x_{il}x_{ij}+x_{ki}x_{jl}x_{ji}) =
2\sum_{\substack{ij\in\overline{D}\\ i,j\not\in\{k,l\}}}
x_{kl}x_{ij}(x_{kj}x_{il}-x_{lj}x_{ik}).
\]
Since
$kl, ij\in A_0\sqcup A_2$, $kj\in A_1^+$,
$il\in A_1^-$ and $lj=\widehat{kj}$,
$ik=\widehat{il}$, we have
$F_{4,3}^{kl}\in RD_I$. Similarly,
$F_{4,3}^{lk}\in RD_I$.

If a quadruple
$q\in\overline{D}_4$ has four common points with the set
$P_{kl}$, then either
$q=\{kj, kj', lj, lj'\}$ or
$q=\{jk, j'k, jl, j'l\}$, i.e.\ two elements of
$q$ belong to the short
$k$-row (or column) and another two to the short
$l$-row (or column). Since
$x_{ij}=-x_{ji}$, we have
$F_{4,4}= 8\sum_{0\leqslant j<j'\leqslant 8} x_{kj}x_{kj'}x_{lj}x_{lj'}$,
where
$j,j'\not\in\{k,l\}$, so
$$
F_2^*+F_{4,4}=4\sum_{j,j'\not\in\{k,l\},j<j'}
(x_{kj}^2x_{kj'}^2+x_{lj}^2x_{lj'}^2+
2x_{kj}x_{kj'}x_{lj}x_{lj'})\in RD_I,
$$
because
$(x_{kj}x_{kj'}+x_{lj}x_{lj'})^2\in RD_I$ by definition.

In conclusion, the form
$\Omega^8_0$ is invariant with respect
to the action of each of the subgroups
$M^t_{kl}$ generating the Lie group
$\mathrm{Spin}(9)$, i.e.\ the form
$\Omega^8_0$ is
$\mathrm{Spin}(9)$-invariant.

\medskip

{\it The $8$-form
$\Omega^8_0$ is not trivial.} To
this end we consider the eight vectors
$X_i=(u_i,0)$ and two vectors
$X=(x,0)$ and
$Y=(y,0)$ belonging to the space
$\mathbb{O}^2$. Using the
expressions~\eqref{eq.2.3} for the endomorphisms
$I_i$, we obtain that for
$0\leqslant i,j\leqslant 7$,
$i\not= j$, one has
$$
\varpi_{ij}(X,Y)=g(X, I_{ij} Y)= \inp{(x,0)}{(u_i(\bar u_j y),0)}
= \inp{x}{u_i(\bar u_j y)}
$$
and
\begin{equation}\label{eq.2.7}
\varpi_{i8}(X,Y)=0,
\end{equation}
because the vector
$X=(x,0)$ is orthogonal to
$I_{i8} Y=(0,-\bar y u_i)$. We can rewrite the expression for
$\varpi_{ij}(X,Y)$ as
\begin{equation}\label{eq.2.8}
\varpi_{ij}(X,Y)= \langle x, u_i(\bar u_j
y)\rangle= \langle \bar u_i x,\bar u_j y\rangle= \langle \bar x
u_i,\bar y u_j\rangle,
\end{equation}
because~(cf.\ \cite[Sect.\ 2]{BroGra}) for arbitrary octonions
$a ,b,c\in\mathbb{O}$, one has
\begin{equation}\label{eq.2.9}
\langle a b,c\rangle= \langle
b,\bar a c\rangle= \langle
a ,c\bar b\rangle \quad\text{and}\quad \langle a
,b\rangle= \langle\bar a  ,\bar b\rangle.
\end{equation}

Since $\Omega^8_0$ is a sum of the $8$-forms
$W(i,i';j,j')=\varpi_{ij} \wedge \varpi_{ij'} \wedge \varpi_{i'j}
\wedge\varpi_{i'j'}$, it is sufficient to show that
$W^0(i,i';j,j')=W(i,i';j,j')(X_0,\dotsc,X_7)<0$. It is clear that
the 8-form  $W(i,i';j,j')$ is determined by the unordered pairs
$\{i,i'\}$ and $\{j,j'\}$ of rows and columns, so
$W(i,i';j,j')=W(i',i;j,j')$ and $W(i,i';j,j')=W(i,i';j',j)$.
Moreover, since $\varpi_{ij}=-\varpi_{ji}$ and all these 2-forms
commute, we have
\begin{equation}\label{eq.2.10}
W(i,i';j,j')=W(j,j';i,i').
\end{equation}

Let $S_8$ be the permutation group  acting on the set
$B=\{u_0,\dotsc,u_7\}$ and let $B^\pm=\{\pm u_0,\dotsc,\pm u_7\}$.
For arbitrary $v,v',w,w'\in B^\pm$, put
\[
\widetilde W^0(v,v';w,w')= 2^{-4}\sum_{\sigma\in S_8}A_\sigma(v,v';w,w'),
\]
where $A_\sigma$, for $\sigma=(u_{i_0},\dotsc,u_{i_7})$, is given by
\[
A_\sigma(v,v';w,w') = \varepsilon(\sigma) \langle u_{i_0},
v(w u_{i_1})\rangle
\langle u_{i_2}, v(w' u_{i_3})\rangle
\langle u_{i_4}, v'(w u_{i_5})\rangle
\langle u_{i_6}, v'(w' u_{i_7})\rangle.
\]

As the elements $v,v',w,w'$ occur in this expression twice, we have
\begin{equation}\label{eq.2.11}
\widetilde W^0(v,v';w,w')=
\widetilde W^0(\pm v,\pm v';\pm w,\pm w').
\end{equation}
By definition
$W^0(i,i';j,j')=\widetilde W^0(u_i,u_{i'};\bar u_j,\bar u_{j'})$,
but as
$\bar u_l=\pm u_l$, it follows that
\begin{equation}\label{eq.2.12}
W^0(i,i';j,j')=\widetilde W^0(u_i,u_{i'};u_j,u_{j'}).
\end{equation}

We now prove two lemmas.

\begin{lemma}\label{le.2.2}
For an arbitrary automorphism $\Phi$ of the algebra $\mathbb{O}$
preserving the set $B^\pm$, one has
$\widetilde W^0(v,v';w,w')= \widetilde
W^0(\Phi(v),\Phi(v');\Phi(w),\Phi(w'))$.
\end{lemma}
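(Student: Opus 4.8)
The plan is to exploit the defining algebraic properties of $\Phi$ together with a relabelling of the summation index $\sigma$. First I would record what it means for $\Phi$ to preserve $B^\pm$: since $\Phi$ is a bijection commuting with $v\mapsto -v$, its inverse $\Phi^{-1}$ is again an automorphism of $\mathbb{O}$ preserving $B^\pm$, so there are a permutation $\kappa\in S_8$ and signs $\eta_m\in\{\pm1\}$ with $\Phi^{-1}(u_m)=\eta_m u_{\kappa(m)}$. I would also recall the two standard facts about octonion automorphisms that the argument needs: every $\Phi\in\operatorname{Aut}(\mathbb{O})$ fixes $1=u_0$ and preserves the norm form, hence is orthogonal (so $\langle\Phi(a),\Phi(b)\rangle=\langle a,b\rangle$, and likewise for $\Phi^{-1}$); and $\operatorname{Aut}(\mathbb{O})=G_2\subset\mathrm{SO}(8)$, so $\det\Phi^{-1}=1$. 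Reading $\det\Phi^{-1}$ off the signed permutation matrix gives $\varepsilon(\kappa)\prod_{m}\eta_m=1$, i.e.\ $\prod_{m=0}^{7}\eta_m=\varepsilon(\kappa)$; this single identity is what will make the signs balance.

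The central computation is to push $\Phi$ through one factor of $A_\sigma$. Because $\Phi^{-1}$ is an inner-product-preserving automorphism, I would apply it inside each bracket:
\[
\langle u_{i_0},\Phi(v)(\Phi(w)u_{i_1})\rangle
=\langle \Phi^{-1}(u_{i_0}),\,v\,(w\,\Phi^{-1}(u_{i_1}))\rangle
=\eta_{i_0}\eta_{i_1}\langle u_{\kappa(i_0)},\,v(w\,u_{\kappa(i_1)})\rangle,
\]
where the middle equality uses only that $\Phi^{-1}$ distributes over the octonion product (no reassociation is performed, so nonassociativity is harmless) and the last uses bilinearity. Applying the same step to the remaining three factors of $A_\sigma(\Phi(v),\Phi(v');\Phi(w),\Phi(w'))$ produces the global sign $\eta_{i_0}\eta_{i_1}\cdots\eta_{i_7}=\prod_{m}\eta_m=\varepsilon(\kappa)$, since $i_0,\dots,i_7$ is a permutation of $0,\dots,7$.

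It then remains to repackage the result as a genuine term of the original sum. Writing $\sigma'=(u_{\kappa(i_0)},\dots,u_{\kappa(i_7)})$, which as a permutation is $\kappa\sigma$ with $\varepsilon(\sigma')=\varepsilon(\kappa)\varepsilon(\sigma)$, the displayed identities give
\[
A_\sigma(\Phi(v),\Phi(v');\Phi(w),\Phi(w'))
=\varepsilon(\sigma)\varepsilon(\kappa)\,\langle u_{\kappa(i_0)},v(wu_{\kappa(i_1)})\rangle\cdots
=A_{\sigma'}(v,v';w,w').
\]
Since $\sigma\mapsto\sigma'=\kappa\sigma$ is a bijection of $S_8$, summing over $\sigma$ and dividing by $2^4$ yields $\widetilde W^0(\Phi(v),\Phi(v');\Phi(w),\Phi(w'))=\widetilde W^0(v,v';w,w')$. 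The only delicate point, and the step I would treat most carefully, is the bookkeeping of signs: one must check that the product $\prod_m\eta_m$ of the sign factors coming from $\Phi^{-1}$ is exactly cancelled by the change of the permutation sign $\varepsilon(\sigma)\mapsto\varepsilon(\kappa\sigma)$ under the relabelling, and this cancellation is precisely the content of the identity $\prod_m\eta_m=\varepsilon(\kappa)$ guaranteed by $\det\Phi=1$.
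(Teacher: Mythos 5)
Your proof is correct and takes essentially the same route as the paper's: you express the automorphism (in your case $\Phi^{-1}$, in the paper's case $\Phi$ itself) as a signed permutation of $B$, invoke $\mathrm{G}_2\subset\mathrm{SO}$ to obtain the sign identity $\prod_m\eta_m=\varepsilon(\kappa)$ (the paper's $\prod_{k}\varepsilon^\Phi_{u_k}\cdot\varepsilon(\sigma^\Phi)=1$), and conclude by the relabelling $\sigma\mapsto\kappa\sigma$ of the sum over $S_8$. The choice of pushing $\Phi^{-1}$ inside the inner products rather than applying $\Phi$ to them is an immaterial mirror-image of the paper's computation.
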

\begin{proof}
It is clear that
$\Phi(u_k)=\varepsilon^\Phi_{u_k} \sigma^\Phi(u_{k})$, where
$\varepsilon^\Phi_{u_k}=\pm 1$ and
$\sigma^\Phi$ is some permutation in
$S_8$. Moreover, since
$\Phi$ is an element of the exceptional connected Lie group
$\mathrm{G}_2\subset \mathrm{SO}(7)$, we have
$\prod_{k=0}^{7}\varepsilon^\Phi_{u_k}\cdot \varepsilon(\sigma^\Phi)=1$
and, consequently, we have
$A_{\sigma^\Phi\sigma}(\Phi(v),\Phi(v');\Phi(w),\Phi(w'))=
A_{\sigma}(v,v';w,w')$, because
$\varepsilon(\sigma^\Phi\sigma)=\varepsilon(\sigma^\Phi)
\varepsilon(\sigma)$ and
$\sigma^\Phi\sigma(u_k)=\varepsilon^\Phi_{\sigma(u_k)}
\Phi(\sigma(u_k))$. Noting that $\sigma^\Phi S_8=S_8$, we
conclude.
\end{proof}
\begin{lemma}\label{le.2.3}
For any $u\in B^\pm$, one has
 $\widetilde W^0(v,v';w,w')= \widetilde W^0(vu,v'u;wu,w'u)$.
\end{lemma}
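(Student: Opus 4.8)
\emph{Proof proposal.} The plan is to recognise $\widetilde W^0$ as the top-degree component of a wedge of four $2$-forms on $\mathbb O\cong\mathbb R^8$, and then to show that the substitution $v\mapsto vu$, etc., pulls back all four forms by one and the same rotation. First I would observe that each factor $\langle u_{i_0},v(wu_{i_1})\rangle$ is the $(i_0,i_1)$ entry, in the orthonormal basis $B$, of the operator $L_vL_w\colon X\mapsto v(wX)$. Thus $A_\sigma$ is $\varepsilon(\sigma)$ times a product of one entry from each of the four operators $L_vL_w$, $L_vL_{w'}$, $L_{v'}L_w$, $L_{v'}L_{w'}$, taken over the eight distinct indices $i_0,\dots,i_7$. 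Replacing $\sigma$ by its composite with the transposition that exchanges the two indices of a single factor reverses $\varepsilon(\sigma)$, so only the antisymmetric part of each operator survives the signed sum. Hence, up to a fixed combinatorial constant $c$ that is the \emph{same} for every choice of arguments,
\[
\widetilde W^0(v,v';w,w')=c\,\bigl[\omega_{v,w}\wedge\omega_{v,w'}\wedge\omega_{v',w}\wedge\omega_{v',w'}\bigr]_{\mathrm{top}},
\]
where $\omega_{v,w}$ is the $2$-form $(X,Y)\mapsto\langle X,v(wY)\rangle$ and $[\,\cdot\,]_{\mathrm{top}}$ is the coefficient of the volume form of $\mathbb O$.

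The key step is the pullback identity $\omega_{vu,wu}=R_u^{*}\,\omega_{v,w}$ for all $v,w$, where $R_u\colon X\mapsto Xu$. Using the adjoint relations \eqref{eq.2.9} and the orthogonality of $R_u$ (valid since $u\in B^\pm$ is a unit), I would rewrite
\[
\langle X,(vu)((wu)Y)\rangle=\langle (\bar u\bar w)((\bar u\bar v)X),Y\rangle,\qquad \langle Xu,v(w(Yu))\rangle=\langle (\bar w(\bar v(Xu)))\bar u,Y\rangle,
\]
so that the required identity $\omega_{vu,wu}(X,Y)=\omega_{v,w}(Xu,Yu)$ becomes the equality of the antisymmetric parts of the two operators $L_{\bar u\bar w}L_{\bar u\bar v}$ and $R_{\bar u}L_{\bar w}L_{\bar v}R_u$. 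This is now a statement inside the \emph{associative} endomorphism algebra of $\mathbb O$, which I would settle using the left alternative law, the Moufang identities, and the fact that the associator $[a,X,b]=(aX)b-a(Xb)$ is alternating, so that its symmetric-in-$X$ contributions drop out of the antisymmetric part.

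Granting this identity, the substitution $(v,v';w,w')\mapsto(vu,v'u;wu,w'u)$ pulls each of the four $2$-forms back by the \emph{same} map $R_u$, and therefore multiplies the top form $\omega_{v,w}\wedge\omega_{v,w'}\wedge\omega_{v',w}\wedge\omega_{v',w'}$ by $\det R_u$. Since $u\mapsto R_u$ is continuous on the connected sphere of unit octonions and $R_{1}=\mathrm I$, we have $R_u\in\mathrm{SO}(8)$, so $\det R_u=1$; as the constant $c$ above is the same on both sides, this yields the exact equality $\widetilde W^0(vu,v'u;wu,w'u)=\widetilde W^0(v,v';w,w')$, which for $u\in B^{\pm}$ is precisely the assertion of the lemma.

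I expect the genuine difficulty to lie entirely in the operator identity of the second paragraph, that is, in controlling octonionic non-associativity when moving the fixed unit $u$ across the products. The decisive simplification is that one need not match the two operators $L_{\bar u\bar w}L_{\bar u\bar v}$ and $R_{\bar u}L_{\bar w}L_{\bar v}R_u$ themselves (they differ already for $v=w=1$), but only their antisymmetric parts; the discrepancy is symmetric and is exactly what the antisymmetrisation over $S_8$ discards.
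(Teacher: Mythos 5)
Your reduction of $\widetilde W^0$ to (a universal constant times) the top-degree component of $\mathrm{Alt}(\omega_{v,w})\wedge\mathrm{Alt}(\omega_{v,w'})\wedge\mathrm{Alt}(\omega_{v',w})\wedge\mathrm{Alt}(\omega_{v',w'})$ is correct, and so is your closing step: a common pullback of all four factors by a single linear map of determinant $1$ does not change the value of a top form on $(u_0,\dotsc,u_7)$. The gap is the key identity itself: $\omega_{vu,wu}=R_u^{*}\omega_{v,w}$ is false, even for antisymmetric parts, and it is exactly the step you deferred. Take $v=u_0$ and $w=u$, with $u$ an imaginary unit of $B$. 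Then $\omega_{v,w}(X,Y)=\langle X,uY\rangle$ is already antisymmetric; flexibility $u(Yu)=(uY)u$ and orthogonality of $R_u$ give $R_u^{*}\omega_{v,w}=\omega_{v,w}$, while $vu=u$ and $wu=u^2=-u_0$ give $\omega_{vu,wu}=-\omega_{v,w}$. The discrepancy $-2\,\omega_{v,w}$ is antisymmetric and nonzero, so your concluding claim that the failure of the operator identity ``is symmetric and is exactly what the antisymmetrisation over $S_8$ discards'' is wrong (it happens to be symmetric for $v=w=u_0$, the case you checked, but not in general). This configuration is not marginal: it is precisely what occurs when the lemma is applied, as in the paper, to normalize $u_i$ to $u_0$ by right multiplication by $u=\pm u_i$.

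What is true is only the weaker statement $\mathrm{Alt}\,\omega_{vu,wu}=s\,\mathrm{Alt}\,R_u^{*}\omega_{v,w}$ with a sign $s=s(v,w,u)=\pm1$, and Lemma~\ref{le.2.3} survives because each of $v,v',w,w'$ occurs twice, so the four signs multiply to $+1$; but establishing this sign law is precisely the octonionic bookkeeping your sketch omits, and it is the whole content of the paper's proof. The paper rewrites each factor of $A_\sigma$ as $\langle vu_{i_k},wu_{i_l}\rangle$ via \eqref{eq.2.8} (the signs from $\bar v=\pm v$ cancel in pairs), then uses the alternating associator to move $u$ across one basis vector at a time: $(au)u_k=\pm(au_k)u$, with the minus sign exactly when $u_k$ anticommutes with $u$. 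Since each $u_{i_k}$ occurs once in $A_\sigma$ and exactly six of the eight elements of $B$ anticommute with $u$ (the case $u=\pm u_0$ being trivial), the total sign is $(-1)^6=+1$, and $\langle au,bu\rangle=\langle a,b\rangle$ finishes. Equivalently, in your framework, the correct common pullback is not $R_u$ but the conjugation $c_u\colon X\mapsto uXu^{-1}$ applied to the forms $\beta_{v,w}(X,Y)=\langle vX,wY\rangle$ (which equal $\pm\omega_{v,w}$, the signs again cancelling in fours): one has the exact identity $\beta_{vu,wu}=c_u^{*}\beta_{v,w}$, and $c_u$ is diagonal in the basis $B$ with exactly six entries $-1$, hence $\det c_u=+1$. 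As written, however, your argument rests on a false identity, so it does not constitute a proof.
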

\begin{proof}
Since the lemma is obvious for
$u=\pm u_0$, assume that
$u\not= \pm u_0$. Due to the
relations~\eqref{eq.2.8} and the fact that
$\bar u_k=\pm u_k$, we can rewrite the expression for
$A_\sigma(v,v';w,w')$ as
$\varepsilon(\sigma) \langle v u_{i_0},
w u_{i_1}\rangle \langle v u_{i_2},
w' u_{i_3}\rangle \langle v' u_{i_4},
w u_{i_5}\rangle \langle v' u_{i_6}, w' u_{i_7}\rangle$
(the elements $v,v',w,w'$ occur in this expression twice). But for
arbitrary octonions $a ,b,c$, their associator
$(a,b,c)=(ab)c-a(bc)$ is skew-symmetric with
respect to the second and third arguments, i.e.\
$(a b)c+(a c)b= a (bc+cb)$ (cf.~\cite[Sect.\ 2]{BroGra}). Thus, if
$u_k u=-u u_k$ then
$(a u_k)u=(-a u)u_k$. Since
$u\not= \pm u_0$, one has
$u_k u\not=-u u_k$ if and only if either
$u_k=u_0$ or
$u_k=\pm u$. It is clear that in these two cases one has
$(a u)u_k=(a u_k)u$. Noting then
that precisely six elements of the set
$B$ anticommute with
$u$ and that by~\eqref{eq.2.9}, one has
$\langle a u, b u\rangle= \langle a , (b u)\bar u \rangle=
\langle a , b |u|^2 \rangle= \langle a , b\rangle$,
we conclude.
\end{proof}

Suppose now as usual that the basis $B$ coincides with the set
$\{1,\mathbf{i},\mathbf{j},\mathbf{i}\mathbf{j},\mathbf{e},
\mathbf{i}\mathbf{e}$, $\mathbf{j}\mathbf{e},
(\mathbf{i}\mathbf{j})\mathbf{e}\}$, where $\mathbf{i}=u_1$,
$\mathbf{j}=u_2$ and $\mathbf{e}=u_4$, so that for instance
$u_5=u_1 u_4$. Each element of the algebra $\mathbb{O}$ admits a unique
expression as $q_1+q_2 \mathbf{e}$ with $q_1,q_2\in\mathbb{H}$, where
$\mathbb{H}$ is the quaternion algebra generated by
$\mathbf{i},\mathbf{j}$. Then the multiplication in $\mathbb{O}$ is
defined by the standard multiplication relations in $\mathbb{H}$ and by
the relations
\begin{equation}
\label{eq.2.13}
q_1(q_2 \mathbf{e})=(q_2q_1)\mathbf{e}, \quad (q_1
\mathbf{e})q_2=(q_1\bar q_2)\mathbf{e}, \quad (q_1
\mathbf{e})(q_2 \mathbf{e})= -\bar q_2 q_1.
\end{equation}

Put $B^0=B\setminus u_0$. Let $\mathbf{i'},\mathbf{j'},\mathbf{e'}$ be
three arbitrary distinct elements of the set
$B^0\cup(-B^0)$ such that
$\mathbf{e'}\not=\pm\mathbf{i'}\mathbf{j'}$. Then there exists a
unique automorphism $\Phi$ of the octonion algebra $\mathbb{O}$ such that
$\Phi(\mathbf{i'})=u_1$, $\Phi(\mathbf{j'})=u_2$ and
$\Phi(\mathbf{e'})=u_4$ (cf.\ \cite[Lect.\ 15]{Pos}). It is evident
that $\Phi(u_0)=u_0$. Now, taking into account  Lemmas~\ref{le.2.2}
and~\ref{le.2.3}, and the relations~\eqref{eq.2.10}, \eqref{eq.2.11}
and~\eqref{eq.2.12}, we have to calculate only the four numbers
\[
\widetilde W^0(u_0,u_0; u_1, u_1), \;\, \widetilde W^0(u_0,u_0;
u_1, u_2), \;\, \widetilde W^0(u_0,u_1; u_2, u_3), \;\, \widetilde
W^0(u_0,u_1; u_2, u_4).
\]

Indeed, calculating $\widetilde W^0(u_i,u_{i'};u_j,u_{j'})$,
by Lemma~\ref{le.2.3} we can suppose that $u_i=u_0$. If the
sequence $(ij,ij',i'j,ij')$ originates a $1$-element subset of $\overline{D}$,
i.e.\ $i=i'=0$ and $j=j'$, then $\Phi(u_j)=u_1$ for some automorphism
$\Phi$ of $\mathbb{O}$;
if $(ij,ij',i'j,ij')$ originates an 2-element subset of $\overline{D}$,
for instance $i=i'=0$ and $j\not=j'$, then $\Phi(u_j)=u_1$ and
$\Phi(u_{j'})=u_2$ for some automorphism $\Phi$ (when $j=j'$ we can
suppose by Lemma~\ref{le.2.3} that $j=0$ and use~\eqref{eq.2.10}); if this
sequence originates an 4-element subset of $\overline{D}$,
i.e.\ all $i=0,i',j,j'$ are distinct, then according to
either $u_{j'}=\pm u_{i'}u_j$ or $u_{j'}\not=\pm u_{i'}u_j$,
we can obtain as image of the triple $u_{i'};u_j,u_{j'}$ under
$\Phi$ the triple $u_1; u_2, u_3$ or $u_1; u_2, u_4$, respectively.

First of all we consider the restriction $\varpi'_{ij}$ of the
form $\varpi_{ij}$ to the subspace $V\subset\mathbb{O}^2$ generated by
the vectors $X_k$, for $k=0,\dotsc,7$. Let
$\{x^*_0,\dotsc,x^*_7\}$ be the dual basis of $V^*$. Using the
relations~\eqref{eq.2.13} it is easy to verify that
\[
\varpi'_{01}= x^*_0\wedge x^*_1 +x^*_2\wedge x^*_3 +x^*_4\wedge
x^*_5 -x^*_6\wedge x^*_7.
\]
Therefore we have
$\varpi'_{01}\wedge \varpi'_{01}\wedge \varpi'_{01} \wedge
\varpi'_{01}=-24\, x^*_0\wedge x^*_1 \wedge \dotsb \wedge x^*_7$,
that is, $\widetilde W^0(u_0,u_0; u_1, u_1)=-24$. Thus $\widetilde
W^0(u_i,u_i; u_j, u_j)=-24$ for arbitrary $0\leqslant i,j\leqslant
7$, $i\not=j$, because $u_ju_i\not=\pm u_0$ and, consequently, there
exists some automorphism $\Phi$ such
that $\Phi(\pm u_ju_i)=u_1$. In other words,
\[
\varpi'_{ij}= \varepsilon_0 x^*_{i_0}\wedge x^*_{i_1}
+\varepsilon_2 x^*_{i_2}\wedge x^*_{i_3} +\varepsilon_4
x^*_{i_4}\wedge x^*_{i_5} +\varepsilon_6 x^*_{i_6}\wedge
x^*_{i_7},
\]
where $\sigma_{ij}=(i_0,\dotsc,i_7)$ is some permutation of the
set $\{0,\dotsc,7\}$, $\varepsilon_{2k}=\pm 1$, and $\prod_{k=0}^3
\varepsilon_{2k}\cdot \varepsilon(\sigma_{ij})=-1$. Consider also
the form
$$
\varpi'_{ij'}= \varepsilon'_0 x^*_{j_0}\wedge x^*_{j_1}
+\varepsilon'_2 x^*_{j_2}\wedge x^*_{j_3} +\varepsilon'_4
x^*_{j_4}\wedge x^*_{j_5} +\varepsilon'_6 x^*_{j_6}\wedge
x^*_{j_7},
$$
where $i\not=j'$ and $j'\not=j$.

We now show two more lemmas.

\begin{lemma}
For arbitrary distinct elements
$i,j,j'\in\{0,\dotsc,7\}$, the $4$-form $\varpi'_{ij}\wedge
\varpi'_{ij'}$ is a sum of at most eight linearly independent terms
$($$4$-forms$)$ $\varpi'_{k,ij,ij'}$, $k=0,\dotsc,7$, of type $\pm
x^*_{k_0}\wedge x^*_{k_1}\wedge x^*_{k_2}\wedge x^*_{k_3}$. For
each such term $\varpi'_{k,ij,ij'}$, there is a unique term
$\varepsilon_{2p}\, x^*_{i_{2p}}\wedge x^*_{i_{2p+1}}$ of
$\varpi_{ij}$ and a unique term $\varepsilon'_{2p'}
x^*_{j_{2p'}}\wedge x^*_{j_{2p'+1}}$ of $\varpi_{ij'}$ such that
their exterior product is proportional to $\varpi'_{k,ij,ij'}$
$($and, consequently, it is equal to $\varpi'_{k,ij,ij'}$$)$.
\end{lemma}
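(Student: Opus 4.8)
The plan is to read each of the two factors as a \emph{perfect matching} on the index set $\{0,\dots,7\}$ and to reduce the whole statement to the single assertion that these two matchings have no edge in common. Using the expansions displayed just before the lemma, I would write $\varpi'_{ij}=\sum_{p=0}^{3}\varepsilon_{2p}\,x^*_{i_{2p}}\wedge x^*_{i_{2p+1}}$ and $\varpi'_{ij'}=\sum_{p'=0}^{3}\varepsilon'_{2p'}\,x^*_{j_{2p'}}\wedge x^*_{j_{2p'+1}}$; since $\sigma_{ij}$ and $\sigma_{ij'}$ are permutations of $\{0,\dots,7\}$, each factor splits the eight indices into four disjoint $2$-element blocks, i.e.\ determines a perfect matching $\mathcal M$ (resp.\ $\mathcal M'$). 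Expanding the product produces the $16$ monomials $\varepsilon_{2p}\varepsilon'_{2p'}(x^*_{i_{2p}}\wedge x^*_{i_{2p+1}})\wedge(x^*_{j_{2p'}}\wedge x^*_{j_{2p'+1}})$; such a monomial vanishes exactly when the two blocks meet, and otherwise equals $\pm$ one basis $4$-form, supported on the $4$-element union of the two blocks. So everything reduces to understanding, among the blocks of $\mathcal M$ and $\mathcal M'$, which pairs are disjoint and which unions coincide.

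The key step, and the one I expect to be the real obstacle, is to prove that $\mathcal M$ and $\mathcal M'$ share no block, i.e.\ that no pair $\{a,b\}$ is matched both by $\varpi'_{ij}$ and by $\varpi'_{ij'}$. I would argue by contradiction. By \eqref{eq.2.8}, a block $\{a,b\}$ of $\mathcal M$ forces the signed basis vectors $\bar u_i u_a$ and $\bar u_j u_b$ to be parallel, and a block $\{a,b\}$ of $\mathcal M'$ forces $\bar u_i u_a$ and $\bar u_{j'}u_b$ to be parallel; a common block would therefore give $\bar u_j u_b=\pm\,\bar u_{j'}u_b$, whence $\langle \bar u_j u_b,\bar u_{j'}u_b\rangle=\pm1$. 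But right multiplication by the unit $u_b$ is an isometry, so by \eqref{eq.2.9} one has $\langle \bar u_j u_b,\bar u_{j'}u_b\rangle=\langle \bar u_j,\bar u_{j'}\rangle=\langle u_j,u_{j'}\rangle=0$, the last equality because $j\neq j'$. This contradiction settles the claim; note that it is exactly here that the octonionic structure is used, and that the index $i$ drops out entirely.

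Granting edge-disjointness, the counting and the uniqueness are then pure matching combinatorics. First I would fix a block $e$ of $\mathcal M$: its two indices lie in two \emph{distinct} blocks of $\mathcal M'$ (they cannot lie in a single block, since that block would then equal $e$ and be a common block), so precisely $4-2=2$ blocks of $\mathcal M'$ are disjoint from $e$; summing over the four blocks of $\mathcal M$ yields exactly $8$ nonvanishing monomials. For uniqueness, suppose two nonvanishing monomials, arising from blocks $e,f\in\mathcal M$ and $e',f'\in\mathcal M'$, are supported on one and the same $4$-set $S$; then $e'=S\setminus e$ and $f'=S\setminus f$, and if $e\neq f$ the disjoint blocks $e,f\subset S$ would force $f=S\setminus e=e'$, i.e.\ $f\in\mathcal M\cap\mathcal M'$, which is impossible. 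Hence $e=f$, the generating pair of blocks is unique, and the $8$ monomials are supported on $8$ distinct $4$-sets, so they are linearly independent. Relabelling them as $\varpi'_{k,ij,ij'}$, $k=0,\dots,7$, gives the asserted decomposition, with each term equal to the exterior product of the unique pair consisting of one term of $\varpi'_{ij}$ and one term of $\varpi'_{ij'}$.
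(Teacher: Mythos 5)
Your proposal is correct and follows essentially the same route as the paper: both read the four terms of $\varpi'_{ij}$ and $\varpi'_{ij'}$ as two perfect matchings on $\{0,\dots,7\}$, prove the matchings share no block (the paper by noting $u_l=\pm u_iu_j$ and $u_{l'}=\pm u_iu_{j'}$ are distinct imaginary units, you by the equivalent observation that right multiplication by $u_b$ is an isometry and $\langle u_j,u_{j'}\rangle=0$), and then get the count of $4\cdot 2=8$ terms and the uniqueness of the generating pair by the same disjoint-blocks combinatorics. The only cosmetic difference is that your edge-disjointness argument works directly with $\langle \bar u_i u_a,\bar u_j u_b\rangle$ from \eqref{eq.2.8}, avoiding the paper's implicit use of quasi-associativity of basis units in writing $\varpi'_{ij}(u_a,u_b)=\pm\langle u_a,u_lu_b\rangle$.
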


\begin{proof}
Put $u_l=\pm u_iu_j$ and $u_{l'}=\pm u_iu_{j'}$. It is clear that
$u_l$ and $u_{l'}$ are two distinct
imaginary units of $\mathbb{O}$. Therefore if $\varpi'_{ij}(u_{i_0},
u_{i_1})=\pm \langle  u_{i_0}, u_l u_{i_1}\rangle\not=0$ then
$u_l=\pm u_{i_0}u_{i_1}$ and $u_{l'}\not=\pm u_{i_0}u_{i_1}$,
i.e.\ $\varpi'_{ij'}(u_{i_0}, u_{i_1})=0$. So precisely two terms of
$\varpi'_{ij'}$ contain $x^*_{i_0}$ and $x^*_{i_1}$ as a factor.
Therefore there exists precisely two terms of
$\varpi'_{ij'}$ such that their exterior
product with $x^*_{i_0}\wedge x^*_{i_1}$ is not zero. Since the form
$\varpi'_{ij}$ contains four terms, the number of linearly independent
terms of $\varpi'_{ij} \wedge \varpi'_{ij'}$ is at most eight.

Assume that the product of the terms $x^*_{i_0}\wedge x^*_{i_1}$
and $x^*_{j_0}\wedge x^*_{j_1}$ of the forms $\varpi'_{ij}$ and
$\varpi'_{ij'}$ respectively, is not trivial, i.e.\
$\{i_0,i_1\}\cap\{j_0,j_1\}=\emptyset$. The forms $\varpi'_{ij}$
and $\varpi'_{ij'}$ contain a unique term with the factor
$x^*_{i_0}$. As we show above, in the form $\varpi'_{ij'}$ the
second factor of this term is not equal to $x^*_{i_1}$. Assume
that this factor is equal to $x^*_{j_k}$, $k=0,1$. Then
$\varpi'_{ij'}(u_{i_0}, u_{j_k})\not=0$, i.e.\ $u_{i_0}=\pm
u_{l'}u_{j_k}$. But $u_{j_0}=\pm u_{l'}u_{j_1}$, i.e.\
$\{i_0,i_1\}\cap\{j_0,j_1\}\not=\emptyset$. This contradicts our
non-triviality assumption. We can proceed similarly in the case of
the factor $x^*_{i_1}$.
\end{proof}

\begin{lemma}\label{le.2.5}
For arbitrary distinct elements
$i,j,j'\in\{0,\dotsc,7\}$ and for $0\leqslant i'\leqslant7$, the
expression $\widetilde W^0(u_i,u_{i'};u_j,u_{j'})=
2^{-4}\sum_{\sigma\in S_8}A_\sigma(u_i,u_{i'};u_j,u_{j'})$
contains at most $2^4\cdot 8$ non-zero terms.
\end{lemma}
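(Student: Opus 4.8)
The plan is to identify the non-vanishing summands of $\widetilde W^0(u_i,u_{i'};u_j,u_{j'})$ with certain partitions of $\{0,\dotsc,7\}$ and then to count them. Writing $\sigma=(u_{i_0},\dotsc,u_{i_7})$ and using~\eqref{eq.2.8} together with $\bar u_l=\pm u_l$, each of the four inner products in $A_\sigma(u_i,u_{i'};u_j,u_{j'})$ is, up to sign, an evaluation of one of the forms $\varpi'_{ij},\varpi'_{ij'},\varpi'_{i'j},\varpi'_{i'j'}$ on a pair of basis vectors; for instance $\langle u_{i_0},u_i(u_j u_{i_1})\rangle=\pm\,\varpi'_{ij}(X_{i_0},X_{i_1})$. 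Since $\varepsilon(\sigma)=\pm1$ and each such inner product lies in $\{0,\pm1\}$, the term $A_\sigma$ is non-zero exactly when the four evaluations $\varpi'_{ij}(X_{i_0},X_{i_1})$, $\varpi'_{ij'}(X_{i_2},X_{i_3})$, $\varpi'_{i'j}(X_{i_4},X_{i_5})$, $\varpi'_{i'j'}(X_{i_6},X_{i_7})$ are all non-zero.

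First I would record that each of these four $2$-forms is supported on a perfect matching of $\{0,\dotsc,7\}$: from the displayed expression for $\varpi'_{ij}$, its four monomials $\varepsilon_{2p}\,x^*_{i_{2p}}\wedge x^*_{i_{2p+1}}$ use the eight indices exactly once, so $\varpi'_{ij}(X_a,X_b)\neq0$ if and only if $\{a,b\}$ is one of the four pairs. Denote by $M_1,M_2,M_3,M_4$ the matchings of $\varpi'_{ij},\varpi'_{ij'},\varpi'_{i'j},\varpi'_{i'j'}$ respectively. A non-vanishing $A_\sigma$ is then the same datum as a choice of pairs $P_1\in M_1$, $P_2\in M_2$, $P_3\in M_3$, $P_4\in M_4$ which are pairwise disjoint (hence partition $\{0,\dotsc,7\}$), together with an ordering of each $P_\alpha$ telling which element is $i_{2\alpha-2}$ and which is $i_{2\alpha-1}$. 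As each $\varpi'$ is skew, both orderings of a pair give a non-zero factor, so each such unordered partition accounts for exactly $2^4$ permutations $\sigma$. Thus the number of non-zero summands equals $2^4N$, where $N$ is the number of partitions $\{0,\dotsc,7\}=P_1\sqcup P_2\sqcup P_3\sqcup P_4$ with $P_\alpha\in M_\alpha$; it remains to show $N\leqslant8$.

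If $i'\in\{j,j'\}$ then $\varpi'_{i'j}=0$ or $\varpi'_{i'j'}=0$ (since $\varpi'_{ll}=0$), so $N=0$ and the bound is trivial; hence I may assume $i',j,j'$ are distinct, so that the preceding lemma applies both to $\varpi'_{ij}\wedge\varpi'_{ij'}$ and to $\varpi'_{i'j}\wedge\varpi'_{i'j'}$. I would then bound $N$ via the map sending a partition to its first half $(P_1,P_2)$. On the one hand there are at most eight disjoint pairs $(P_1,P_2)\in M_1\times M_2$, since by the preceding lemma they correspond bijectively to the monomials of $\varpi'_{ij}\wedge\varpi'_{ij'}$, of which there are at most eight. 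On the other hand this map is injective: the disjoint pairs $(P_3,P_4)\in M_3\times M_4$ likewise correspond bijectively to the linearly independent monomials of $\varpi'_{i'j}\wedge\varpi'_{i'j'}$, hence have pairwise distinct supports, so at most one of them has support equal to the prescribed complement $\{0,\dotsc,7\}\setminus(P_1\cup P_2)$; thus $(P_1,P_2)$ determines the completion $(P_3,P_4)$ uniquely. Consequently $N\leqslant8$, and the number of non-zero terms is at most $2^4\cdot8$.

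The routine points — that the inner products take values in $\{0,\pm1\}$ and that each $\varpi'$ is supported on a matching — are immediate from the octonion multiplication table. The crux is the bound $N\leqslant8$, and within it the genuinely geometric input is the \emph{uniqueness of the completion} $(P_3,P_4)$. A priori a given $4$-element complement could be split into an $M_3$-pair and an $M_4$-pair in more than one way, which would only give $N\leqslant16$; what rules this out is precisely the linear independence (distinct supports) of the monomials of $\varpi'_{i'j}\wedge\varpi'_{i'j'}$ furnished by the preceding lemma. I expect this to be the main obstacle, and everything else to be bookkeeping.
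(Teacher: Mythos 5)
Your proof is correct and follows essentially the same route as the paper's: both reduce a non-vanishing $A_\sigma$ to a choice of one term from each of the four $2$-forms whose index pairs partition $\{0,\dotsc,7\}$, bound the number of such choices by the (at most eight, pairwise linearly independent) terms of $\varpi'_{ij}\wedge\varpi'_{ij'}$ together with the uniquely determined complementary factor in $\varpi'_{i'j}\wedge\varpi'_{i'j'}$, and pick up the factor $2^4$ from the orderings within each pair. Your write-up merely makes explicit two points the paper leaves implicit — the degenerate case $i'\in\{j,j'\}$ and the fact that distinctness of supports is what forces the completion $(P_3,P_4)$ to be unique.
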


\begin{proof}
By the previous lemma, each term of $\varpi'_{ij}\wedge
\varpi'_{ij'}$ is the exterior product of a uniquely defined pair
of terms of the forms $\varpi'_{ij}$ and $\varpi'_{ij'}$. On the
other hand, this term of $\varpi'_{ij}\wedge \varpi'_{ij'}$
determines a unique complementary factor in $x^*_0\wedge
\dotsb \wedge x^*_7$ which belongs to
$\varpi'_{i'j}\wedge \varpi'_{i'j'}$.
If such a factor exists, then
$i'\not\in\{j,j'\}$ and by the previous lemma this factor is the
exterior product of a uniquely defined pair of terms of the forms
$\varpi'_{i'j}$ and
$\varpi'_{i'j'}$. Since the number of terms of
$\varpi'_{ij}\wedge \varpi'_{ij'}$ equals at most
$8$ and due to the skew-symmetry of the
$2$-forms, the Lemma follows.
\end{proof}

Suppose that
$i,j,j'\in\{0,\dotsc,7\}$ and
$i',j,j'\in\{0,\dotsc,7\}$ are two
triples containing three distinct elements.
Due to the skew-symmetry of the $2$-forms, one has
$\widetilde W^0(u_i,u_{i'};u_j,u_{j'})= \sum_{[\sigma]\in
S'_8}A_\sigma(u_i,u_{i'};u_j,u_{j'})$,
where $S'_8$ $=S_8/S'$ and the subgroup $S'\subset S_8$ is generated
by the $4$ transpositions $(0,1)$, $(2,3)$, $(4,5)$, and $(6,7)$.
By Lemma~\ref{le.2.5} this sum contains at most $8$ non-zero
terms. Let us describe these terms. To this end,
using~\eqref{eq.2.8} we can rewrite the expression for
$A_\sigma(v,v';w,w')$ as
$$
-\varepsilon(\sigma) \langle  u_{i_0} v, u_{i_1} w \rangle \langle
u_{i_2} v, u_{i_3} w' \rangle \langle u_{i_4} v', u_{i_5} w
\rangle \langle u_{i_6} v', u_{i_7} w' \rangle,
$$
as $\bar u_k=-u_k$ for all of the seven imaginary units
and the elements $v,v',w,w'$ occur in this expression twice. Let
$u\in B$ and $a \in B^\pm$. Applying  the same
arguments as in the proof of
Lemma~\ref{le.2.3}, we obtain that if $a  u=-u a $ then
$(u_ka )u=(-u_k u)a $. But $a  u\not=-u a $ if and
only if $a =\pm u$ or $a =\pm u_0$ or $u=\pm u_0$. In all
these cases $(u_ka )u=(u_k u)a $. Since $\langle a  u,
b u\rangle= \langle a , b\rangle$, we obtain the
following  expression for $A_\sigma(v,v';w,w')$: {\small
$$
-\varepsilon(\sigma) \bigl\langle  (u_{i_0}u) v, (u_{i_1}u)
w\bigr\rangle
 \bigl\langle (u_{i_2}u) v, (u_{i_3}u) w'\bigr\rangle
 \bigl\langle (u_{i_4}u) v'\!, (u_{i_5}u) w\bigr\rangle
 \bigl\langle (u_{i_6}u) v', (u_{i_7}u) w'\bigr\rangle
$$}
\noindent (the elements $v,v',w,w'$ occur in this expression
twice).

Suppose now that $A_\sigma(u_i,u_{i'};u_j,u_{j'})\not=0$ for some
$\sigma\in S_8$.  Right multiplication by $u$ determines the
permutation $\sigma^u$ of the set $B$: $u_k
u=\varepsilon^u_{u_k}\sigma^u(u_k)$ ($\varepsilon^u_{u_k}=\pm 1$).
This permutation is even since if $u\not=u_0$ then $u^2=-u_0$ and
$\sigma^u$ is a product of four independent transpositions.
The sequence $(\varepsilon^u_{u_0},\dotsc,\varepsilon^u_{u_7})$
contains an even number of $-1$. One can easily verify this fact
for $u=u_1$ using~\eqref{eq.2.13} and for the other imaginary
units $u_l$ using an automorphism $\Phi$ for which
$\Phi(u_1)=u_l$:
\begin{gather*}
\Phi(u_k)\Phi(u_1) = \varepsilon^\Phi_{u_k}\sigma^\Phi(u_k)\cdot
u_l=\varepsilon^\Phi_{u_k}\varepsilon^{u_l}_{\sigma^\Phi(u_k)}
\sigma^{u_l}(\sigma^\Phi(u_k)), \\
\noalign{\smallskip}
\Phi(u_k u_1) = \Phi(\varepsilon^{u_1}_{u_k}\sigma^{u_1}(u_k))=
\varepsilon^{u_1}_{u_k}
\varepsilon^\Phi_{\sigma^{u_1}(u_k)}\sigma^\Phi(\sigma^{u_1}(u_k)).
\end{gather*}
Taking into account that
$\prod_{k=0}^7 \varepsilon^\Phi_{u_k}=
\prod_{k=0}^7 \varepsilon^\Phi_{\sigma^{u_1}(u_k)}$,
we have
\[
\prod_{k=0}^7 \varepsilon^{u_l}_{u_k}=
\prod_{k=0}^7 \varepsilon^{u_l}_{\sigma^\Phi(u_k)}=
\prod_{k=0}^7 \varepsilon^{u_1}_{u_k}.
\]
Thus $A_\sigma(u_i,u_{i'};u_j,u_{j'})=A_{\sigma^{u_k}\sigma}
(u_i,u_{i'};u_j,u_{j'})$ for all of the eight even permutations
$\sigma^{u_k}$, $k=0,\dotsc,7$. It only remains to be proved that
the permutations $\sigma^{u_k}\sigma$ determine distinct classes in
the quotient group $S'_8$.

Suppose that $\sigma^{u_k}\sigma=\sigma^{u_p}\sigma\cdot s$
for some element $s\in S_8'$ and $k\not=p$. Taking into account that
$\sigma^{u_p}\sigma^{u_k}=\sigma^{u_k}\sigma^{u_p}=\sigma^{u_q}$,
where $u_q\in B$ and
$u_q=\pm u_ku_p=\pm u_pu_k$, we can assume that $u_p=u_0$ and
$\sigma(u_0)=u_0$. But for $u\in B$ we have
$\{\pm u_0u,\pm u_{i_1}u\}=\{\pm u_0,\pm u_{i_1}\}$
if and only if $u\in\{u_0,u_{i_1}\}$.
Since $A_\sigma(u_i,u_{i'};u_j,u_{j'})\not=0$,
we have $u_{i_1}=u_l$ and
$u_{i_3}=\pm u_{l'}u_{i_2}$, where $u_l=\pm u_iu_j$ and
$u_{l'}=\pm u_iu_{j'}$. Taking into account that $u_l\not=u_{l'}$,
we obtain that $u_{i_3}\not=\pm u_{l}u_{i_2}=\pm u_{i_1}u_{i_2}$,
i.e.\ $u_k=u_0$, a contradiction.
Thus the permutations $\sigma^{u_k}\sigma$ determine 8 distinct classes in
$S'_8$. So if the sequences $(i,j,j')$ and $(i',j,j')$ contain $3$
distinct elements then $W^0(i,i';j,j')=8A_\sigma(u_i,u_{i'};u_j,u_{j'})$,
where $\sigma\in S_8$ is an arbitrary permutation such that
$A_\sigma(u_i,u_{i'};u_j,u_{j'})\not=0$. Using now the
relations~\eqref{eq.2.13}, we can describe such permutations for
the following sequences $(i,i';j,j')$:
\begin{align*}
(0,0; 1,2): & \quad \sigma=(0,1,4,6,2,3,5,7),
\quad\varepsilon(\sigma)=-1, \\
(0,1; 2,3): & \quad  \sigma=(0,2,4,7,5,6,1,3),
\quad\varepsilon(\sigma)=-1, \\
(0,1; 2,4): & \quad  \sigma=(0,2,1,5,4,7,3,6),
\quad\varepsilon(\sigma)=1.
\end{align*}
For all these cases $A_\sigma(u_i,u_{i'};u_j,u_{j'})=-1$.
Thus, if the sequences $i,j,j'$ and $i',j,j'$ or the sequences $i,i',j$ and
$i,i',j'$ from the set $\{0,\dotsc,7\}$  contain three distinct elements
(i.e.\ a sequence $ij,ij',i'j,i'j'$ generates either a rectangle or
an interval) then $W^0(i,i';j,j')=-8$. We also proved that
$W^0(i,i;j,j)=-24$ for all $i,j=0,\dots,7$, $i\not=j$.

Let $\overline{D}'$, $\overline{D}'_2$ and
$\overline{D}'_4$ be sets defined for the index set
$\{0,\dotsc,7\}$ as $\overline{D}$, $\overline{D}_2$ and
$\overline{D}_4$ were defined for the index set
$\{0,\dotsc,8\}$. Then
$\overline{D}'_2\subset\overline{D}_2 $ and
$\overline{D}'_4\subset\overline{D}_4 $. Taking into account that
$\#(\overline{D}'_2)=(8\cdot7)(6\cdot2)/2$ and
$\#(\overline{D}'_4)=(8\cdot7)(6\cdot5)/4$ (for each pair
$ij\in\overline{D}'$ there exist
$6\cdot5$ ordered pairs $i'j'\in\overline{D}'$ such that
$\{i,j\}\cap\{i',j'\}=\emptyset$), from~\eqref{eq.2.6} it follows
that
\[
\Omega^8_0(X_0,\dotsc,X_7) = - 24(8\cdot 7) -
8(8\cdot7\cdot 12) - 8(8\cdot7\cdot30)
= - 14\cdot 1440,
\]
hence $\Omega^8_0$ is not trivial.

We must finally prove that the canonical $8$-form on any
$\mathrm{Spin}(9)$-manifold $(M^{16},g,\nu^9)$, given in the
statement, is globally defined. In other words, we must prove that
the definition of the form $\Omega^8_0$ is independent of the
choice of the basis $\{I_j\}$ of the space $V^9=\nu^9(p)$, $p\in
M$, satisfying the relations~\eqref{eq.2.3}. Indeed, given one such
basis $\{I_j\}$, any other basis $\{I'_j\}$ is obtained as
$I_i^\prime = \sum_{0\leqslant j\leqslant 8}m^j_i I_j$, for
$i=0,\dotsc,8$, and $(m^j_i) \in \mathrm{SO}(9)$. From this fact it
follows in particular that the $\mathrm{Spin}(9)$-groups associated
with these two bases coincide. But as we remarked above,
$\pi(\mathrm{Spin}(9))= \mathrm{SO}(9)=SO(V^9)$, i.e. there exists
some element $s\in\mathrm{Spin}(9)$ such that $sI_js^{-1}=I'_j$,
for all $j=0,\ldots,8$. Now since the group $\mathrm{Spin}(9)$
preserves the scalar product $g_p=\inp{\cdot}{\cdot}$ on $T_pM
\equiv \mathbb{O}^2$ and the form $\Omega^8_0$ is
$\mathrm{Spin}(9)$-invariant, the form $\Omega^8_0$ does not depend
on the chosen basis $\{I_j\}$.

\subsection{Some Corollaries to Theorem \ref{th.1.1}}

We can get some consequences of the proof of Theorem~\ref{th.1.1}.
By~\eqref{eq.2.5} with $ij=i'j'\in A_1^+$, the $4$-form
$\sum_{i,j=0,\dots,8}\varpi_{ij}\wedge\varpi_{ij}$
on the space $T_pM \equiv \mathbb{O}^2$ is invariant with respect to
the action of each of the subgroups $M^t_{kl}$ generating the
Lie group $\mathrm{Spin}(9)$. It is $\mathrm{Spin}(9)$-invariant
hence trivial~(\cite[Sect.\ 5]{BroGra})
so it defines a global (trivial) $4$-form on $M$.
We thus obtain the next corollary to Theorem \ref{th.1.1}.

\begin{corollary}\label{co.2.6}
The $4$-form
$\sum_{0\leqslant i<j\leqslant 8}\omega_{ij}\wedge\omega_{ij}=0$,
vanishes, i.e.\ we have
\begin{multline*}
\sum_{0\leqslant i<j\leqslant 8} \bigl\{ \omega_{ij}(X,Y)\omega_{ij}(Z,W)
                  -\omega_{ij}(X,Z)\omega_{ij}(Y,W) \\
                     \noalign{\vspace{-3mm}}
                          + \omega_{ij}(Y,Z)\omega_{ij}(X,W) \bigr\} =0,
\end{multline*}
or, equivalently,
\begin{equation}\label{eq.2.14}
\Cyclic_{XYZ} \sum_{0\leqslant i<j\leqslant 8} \omega_{ij}(X,Y)W^\flat(I_{ij}Z) = 0,
\qquad X,Y,Z,W\in \mathfrak{X}(M).
\end{equation}
\end{corollary}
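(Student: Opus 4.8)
The plan is to recognise both displayed equations as two renderings of the single tensorial assertion $\sum_{0\leqslant i<j\leqslant 8}\omega_{ij}\wedge\omega_{ij}=0$, and to prove that this $4$-form vanishes by a representation-theoretic argument that parallels the invariance half of Theorem~\ref{th.1.1}. Since the claim is pointwise, I would fix $p\in M$ and work with the restricted forms $\varpi_{ij}$ on $T_pM\equiv\mathbb{O}^2$, proving $\sum_{i,j=0}^{8}\varpi_{ij}\wedge\varpi_{ij}=0$; note that the diagonal terms vanish and $\varpi_{ji}\wedge\varpi_{ji}=\varpi_{ij}\wedge\varpi_{ij}$, so this sum is exactly twice the one in the statement.

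The first step is to show that $\Phi_0:=\sum_{i,j=0}^{8}\varpi_{ij}\wedge\varpi_{ij}$ is $\mathrm{Spin}(9)$-invariant. By Proposition~\ref{pr.2.1} it suffices to verify invariance under each one-parameter subgroup $M^t_{kl}$, and I would split $\Phi_0$ according to the decomposition $\overline{D}=A_0\sqcup A_2\sqcup A_1^+\sqcup A_1^-$ used in the proof of Theorem~\ref{th.1.1}. For $ij\in A_0\cup A_2$ the form $\varpi_{ij}$ is itself $M^t_{kl}$-invariant by~\eqref{eq.2.4}, so each summand $\varpi_{ij}\wedge\varpi_{ij}$ is invariant. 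For $ij\in A_1^+$, pairing the summand with the one indexed by $\widehat{ij}\in A_1^-$ and specialising~\eqref{eq.2.5} to the case $ij=i'j'$ shows that $\varpi_{ij}\wedge\varpi_{ij}+\varpi_{\widehat{ij}}\wedge\varpi_{\widehat{ij}}$ is $M^t_{kl}$-invariant. Summing over the three classes yields invariance of $\Phi_0$ under each generator, hence under all of $\mathrm{Spin}(9)$.

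The second step invokes the fact, recorded by Brown and Gray~\cite[Sect.\ 5]{BroGra}, that $\Delta_9$ carries no nonzero $\mathrm{Spin}(9)$-invariant $4$-form. Consequently the invariant form $\Phi_0$ must vanish identically, and therefore so does $\sum_{0\leqslant i<j\leqslant 8}\varpi_{ij}\wedge\varpi_{ij}$ at $p$. As $p$ was arbitrary, the global $4$-form $\sum_{i<j}\omega_{ij}\wedge\omega_{ij}$ is zero.

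It then remains only to transcribe this vanishing into the two stated forms, which is pure bookkeeping. Evaluating $\omega_{ij}\wedge\omega_{ij}$ on $X,Y,Z,W$ with the usual expansion of the wedge of two equal $2$-forms produces precisely the three-term bracket of the first display. For the second display I would use $\omega_{ij}(Z,W)=g(Z,I_{ij}W)$ together with the skew-symmetry of $I_{ij}$ to rewrite one factor of each product as $W^\flat(I_{ij}\,\cdot\,)$, after which the three terms of the bracket reassemble into the cyclic sum $\Cyclic_{XYZ}$. The only genuinely non-routine ingredient in the whole argument is the external input that the space of $\mathrm{Spin}(9)$-invariant $4$-forms is trivial; once that is granted, the invariance computation reuses~\eqref{eq.2.4} and~\eqref{eq.2.5} verbatim and everything else is elementary algebra.
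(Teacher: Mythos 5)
Your proposal is correct and is essentially the paper's own argument: the paper likewise deduces invariance of $\sum_{i,j}\varpi_{ij}\wedge\varpi_{ij}$ under each generating subgroup $M^t_{kl}$ from~\eqref{eq.2.4} and~\eqref{eq.2.5} with $ij=i'j'\in A_1^+$, and then concludes vanishing from Brown and Gray's result that there is no nonzero $\mathrm{Spin}(9)$-invariant $4$-form. Your write-up merely spells out the $A_0\cup A_2$ case and the final bookkeeping a bit more explicitly than the paper does.
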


Moreover, since the $8$-form $(\sum_{i,j=0,\dots,8}
\varpi_{ij}\wedge\varpi_{ij})\wedge (\sum_{i',j'=0,\dots,8}
\varpi_{i'j'}\wedge\varpi_{i'j'})$ vanishes, we
can rewrite the expression of the canonical form as
\begin{corollary}
\[
\Omega^8 = -\frac12\sum_{\substack{i,j=0,\dots,8\\
i',j'=0,\dots,8}}
(\omega_{ij}\wedge\omega_{i'j'}- \omega_{i'j} \wedge \omega_{ij'})\wedge
(\omega_{ij}\wedge\omega_{i'j'}- \omega_{i'j} \wedge \omega_{ij'}).
\]
\end{corollary}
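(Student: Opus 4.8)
The plan is to reduce the whole identity to the vanishing of the $4$-form $T:=\sum_{i,j=0,\dots,8}\omega_{ij}\wedge\omega_{ij}$. By Corollary~\ref{co.2.6} the restricted sum $\sum_{0\leqslant i<j\leqslant 8}\omega_{ij}\wedge\omega_{ij}$ vanishes; since the two orderings contribute equally ($\omega_{ji}\wedge\omega_{ji}=(-\omega_{ij})\wedge(-\omega_{ij})=\omega_{ij}\wedge\omega_{ij}$) and the diagonal terms are zero, the full double-indexed form satisfies $T=0$, and hence $T\wedge T=0$ as an $8$-form. I would start by expanding the summand on the right-hand side: writing $a=\omega_{ij}\wedge\omega_{i'j'}$ and $b=\omega_{i'j}\wedge\omega_{ij'}$, and using that every $\omega_{pq}$ is a $2$-form so that all these forms commute under $\wedge$, the symmetric square $(a-b)\wedge(a-b)$ equals $a\wedge a+b\wedge b-2\,a\wedge b$, with no sign coming from the antisymmetrization.

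Next I would sum over all four indices $i,j,i',j'$ and identify each of the three resulting double sums. For the first, commutativity gives $\sum a\wedge a=\sum\omega_{ij}\wedge\omega_{ij}\wedge\omega_{i'j'}\wedge\omega_{i'j'}$, which factors as $T\wedge T$ and therefore vanishes. For the second, the same rearrangement yields $\sum b\wedge b=\sum(\omega_{i'j}\wedge\omega_{i'j})\wedge(\omega_{ij'}\wedge\omega_{ij'})$; since all four indices range independently over $\{0,\dots,8\}$, after relabelling the summation this again factors as $T\wedge T=0$. For the third, reordering the commuting factors gives $\sum a\wedge b=\sum\omega_{ij}\wedge\omega_{ij'}\wedge\omega_{i'j}\wedge\omega_{i'j'}$, which is exactly the expression for $\Omega^8$ in Theorem~\ref{th.1.1}.

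Collecting terms, the right-hand side equals $-\tfrac12(0+0-2\,\Omega^8)=\Omega^8$, as claimed. The argument is essentially bookkeeping; the only points needing care are the repeated use of commutativity of $2$-forms to bring each quadruple product into a canonical order, and the index relabelling in the second sum that exhibits it as a copy of $T\wedge T$ rather than of $\Omega^8$. The factor $-\tfrac12$ in the statement is precisely what compensates for the coefficient $-2$ on the cross term, so no further normalization is required.
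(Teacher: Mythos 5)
Your proof is correct and takes essentially the same route as the paper: the paper's own (one-line) justification is precisely that the $8$-form $\bigl(\sum_{i,j}\omega_{ij}\wedge\omega_{ij}\bigr)\wedge\bigl(\sum_{i',j'}\omega_{i'j'}\wedge\omega_{i'j'}\bigr)$ vanishes as a consequence of Corollary~\ref{co.2.6}, so that expanding the square kills both ``diagonal'' sums and leaves only the cross terms, which commute and reassemble into the expression of Theorem~\ref{th.1.1}. Your write-up just makes explicit the bookkeeping (commutativity of $2$-forms, the index relabelling in the $b\wedge b$ sum, and the factor $-\tfrac12$ against the coefficient $-2$) that the paper leaves implicit.
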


Furthermore, given a triple $ijq$, we denote by $\widehat{ijq}$ the
new triple obtained by replacing the element $k$ (if it occurs in
$ijq$) by $l$ and the element $l$ (if it occurs in $ijq$) by $k$.
It is easy to verify that for the restriction $\bar\sigma_{ijq}
=\sigma_{ijq}|T_pM$, one has
$$
(M^t_{kl})^*\bar\sigma_{ijq}= \left\{\begin{array}{ll}\bar\sigma_{ijq},
&\text{if}\quad \{k,l\}\cap\{i,j,q\}=\emptyset,\\
\noalign{\smallskip}
\bar\sigma_{ijq}, &\text{if}\quad \{k,l\}\subset\{i,j,q\},
\\\noalign{\smallskip}
\cos 2t\cdot \bar\sigma_{ijq}+\sin 2t\cdot  \bar\sigma_{\widehat{ijq}},
          &\text{if}\quad \{k,l\}\cap\{i,j,q\}=\{k\},\\
\noalign{\smallskip}
\cos 2t\cdot  \bar\sigma_{ijq}-\sin 2t\cdot \bar\sigma_{\widehat{ijq}},
             &\text{if}\quad \{k,l\}\cap\{i,j,q\}=\{l\},
\end{array}\right.
$$
and, consequently, the $4$-form $\sum_{i,j,q=0,\dots,8}
\bar\sigma_{ijq}\wedge\bar\sigma_{ijq}$ on the space $T_pM
\equiv \mathbb{O}^2$ is invariant with respect to the action of each of
the subgroups
$M^t_{kl}$ generating the Lie group $\mathrm{Spin}(9)$. It is
$\mathrm{Spin}(9)$-invariant and, consequently, it is also
trivial~(\cite[Sect.\ 5]{BroGra}), so we obtain

\begin{corollary}
The $4$-form
$\sum_{0\leqslant i<j<k\leqslant 8}\sigma_{ijk}
\wedge\sigma_{ijk}$, vanishes, i.e.\ we have
\begin{multline*}
\sum_{0\leqslant i<j<k\leqslant 8}
\bigl\{\sigma_{ijk}(X,Y)\sigma_{ijk}(Z,W)
                  -\sigma_{ijk}(X,Z)\sigma_{ijk}(Y,W) \\
                     \noalign{\vspace{-3mm}}
                      +\sigma_{ijk}(Y,Z)\sigma_{ijk}(X,W) \bigr\} = 0.
\end{multline*}
\end{corollary}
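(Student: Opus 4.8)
The plan is to repeat verbatim the argument of Corollary~\ref{co.2.6}, with the $2$-forms $\omega_{ij}$ replaced by the $2$-forms $\sigma_{ijk}$, and then to translate the resulting pointwise vanishing into the displayed identity. The essential ingredient is already in place: the transformation law for $\bar\sigma_{ijq}=\sigma_{ijq}|T_pM$ under the generating one-parameter groups $M^t_{kl}$, recorded just above the statement, has exactly the same shape as the law \eqref{eq.2.4} for $\bar\omega_{ij}$. For a triple $T$ meeting $\{k,l\}$ only in $k$, its partner $\widehat T$ meets $\{k,l\}$ only in $l$, and applying $(M^t_{kl})^*$ to the pair $\bar\sigma_T\wedge\bar\sigma_T+\bar\sigma_{\widehat T}\wedge\bar\sigma_{\widehat T}$ reproduces it: the diagonal terms recombine through $\cos^2 2t+\sin^2 2t=1$, while the two cross terms $\pm\,2\sin 2t\cos 2t\,\bar\sigma_T\wedge\bar\sigma_{\widehat T}$ cancel because $2$-forms commute. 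This is the exact analogue of \eqref{eq.2.5}, and summing over all triples shows that the $4$-form $\sum_{i,j,q=0,\dots,8}\bar\sigma_{ijq}\wedge\bar\sigma_{ijq}$ on $T_pM\equiv\mathbb{O}^2$ is invariant under each $M^t_{kl}$, hence $\mathrm{Spin}(9)$-invariant by Proposition~\ref{pr.2.1}.

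First I would invoke the theorem of Brown and Gray \cite[Sect.\ 5]{BroGra} that there are no non-trivial $\mathrm{Spin}(9)$-invariant $4$-forms on $\mathbb{R}^{16}$; together with the invariance just established this forces the ordered sum $\sum_{i,j,q=0,\dots,8}\bar\sigma_{ijq}\wedge\bar\sigma_{ijq}$ to vanish at the fixed point $p$. I expect this triviality input to be the only genuine obstacle: it is where a structural fact about the representation $\rho(\mathrm{Spin}(9))$ enters, everything else being formal. It is exactly the same fact that already drove Corollary~\ref{co.2.6}.

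Next I would pass from this ordered sum to the sum over $0\leqslant i<j<k\leqslant 8$ appearing in the statement. Because the involutions $I_i$ anticommute for distinct indices, the operator $I_iI_jI_q$ is totally antisymmetric in $(i,j,q)$, so $\sigma_{ijq}=g(\cdot\,,I_iI_jI_q\,\cdot\,)$ changes sign under any transposition of its indices and the associated $2$-form vanishes as soon as two indices coincide (the product then reduces to the symmetric involution $I_q$). Hence $\bar\sigma_{ijq}\wedge\bar\sigma_{ijq}$ is unchanged under every permutation of $(i,j,q)$ and is zero unless $i,j,q$ are distinct, so $\sum_{i,j,q=0,\dots,8}\bar\sigma_{ijq}\wedge\bar\sigma_{ijq}=3!\sum_{0\leqslant i<j<k\leqslant 8}\bar\sigma_{ijk}\wedge\bar\sigma_{ijk}$, which gives $\sum_{0\leqslant i<j<k\leqslant 8}\bar\sigma_{ijk}\wedge\bar\sigma_{ijk}=0$ at $p$.

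Finally I would settle globality exactly as at the end of the proof of Theorem~\ref{th.1.1}: any change of the local basis $\{I_j\}$ is realised by some $s\in\mathrm{Spin}(9)$ via the covering $\pi(\mathrm{Spin}(9))=\mathrm{SO}(9)$, and since $\mathrm{Spin}(9)$ preserves both $g_p$ and the invariant $4$-form, the vanishing at $p$ is basis-independent and defines the identically zero global $4$-form $\sum_{0\leqslant i<j<k\leqslant 8}\sigma_{ijk}\wedge\sigma_{ijk}$. Expanding $(\sigma_{ijk}\wedge\sigma_{ijk})(X,Y,Z,W)$ as $\sigma_{ijk}(X,Y)\sigma_{ijk}(Z,W)-\sigma_{ijk}(X,Z)\sigma_{ijk}(Y,W)+\sigma_{ijk}(Y,Z)\sigma_{ijk}(X,W)$ then reproduces the displayed pointwise identity, completing the plan.
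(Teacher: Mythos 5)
Your proposal is correct and follows essentially the same route as the paper: the transformation law for $\bar\sigma_{ijq}$ under $M^t_{kl}$, the resulting invariance of $\sum\bar\sigma_{ijq}\wedge\bar\sigma_{ijq}$ (your pairing argument is exactly the analogue of \eqref{eq.2.5}), and the triviality of $\mathrm{Spin}(9)$-invariant $4$-forms from \cite[Sect.\ 5]{BroGra} as the decisive input. The extra steps you spell out (reduction of the ordered sum to $i<j<k$ via total antisymmetry of $I_iI_jI_q$, globality, and the expansion of the wedge square) are routine details the paper leaves implicit.
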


\begin{remark}
Using the method of the proof of Theorem~\ref{th.1.1} one could
obtain the expression for the canonical form $\Omega^8$ in terms of
the $2$-forms $\sigma_{ijp}$. But since the proof is technically
more complicated, we state it as the next

\smallskip

{\bf Conjecture.} {\it The canonical $8$-form $\Omega^8$
on the $\mathrm{Spin}(9)$-manifold $(M,g,\nu^9)$ is given by}
\begin{equation*}
\Omega^8  = \frac14 \sum_{\substack{i,j=0,\dots,8\\
i',j'=0,\dots,8}}
\sum_{\substack{p,p'=0,\dots,8}}
\sigma_{ijp}\wedge\sigma_{ijp'}\wedge\sigma_{i'j'p}
\wedge\sigma_{i'j'p'}.
\end{equation*}
\end{remark}

\section{$\mathrm{Spin}(9)$-structures as $G$-structures defined by a tensor}
\setcounter{equation}{0}

The concept  of {\it $G$-structure defined\/}
(or {\it characterized$)$ by a tensor\/}
is well known (see Bernard \cite[pp.\
210--212]{Ber}, Fujimoto \cite[p.\ 24]{Fuj}, Mar\'\i n and de
Le\'on \cite[p.\ 377]{MarLeo}, and Salamon \cite[p.\ 11]{Sal};
cf.\ also \cite[pp.\ 127, 175]{Sal}). We now focus our attention
to the case where $G=\mathrm{Spin}(9)$.

We would like to remark firstly that
in this case the tensor used to define a
$\mathrm{Spin}(9)$-structure will never be a stable tensor (cf.\
Friedrich \cite[p.\ 2]{Fri1}, \cite[p.\ 2]{Fri3}). A tensor on
$\mathbb{R}^n$ is said to be stable
if its orbit under the action of
$\mathrm{GL}(n,\mathbb{R})$ is an open subset (see Hitchin
\cite[p.\ 2]{Hit}, Witt \cite[\S\S3.2]{Wit}). These special
structures play an interesting
role in the theory of
$G$-structures. But for
$G=\mathrm{Spin}(9)$ a simple computation of dimensions shows that
the interior of any orbit on the space of
$8$-forms is void.

On the other hand, Friedrich's local bases $\{ \omega_{ij},
\sigma_{ijk} \}$ of $\Lambda^2M$ given in Section~$1$ are related
to the decomposition of $\Lambda^2(\Delta_9)$, which we now recall
(cf.\ e.g.\ Adams \cite[Th.\ 4.6, (ii)]{Ada}). Let $\lambda^r$
denote the representation arising from the $r$th exterior power
representation of $\mathrm{SO}(9)$ via the homomorphism
$\pi\colon\mathrm{Spin}(9)\to\mathrm{SO}(9)$. Then one has
$\Delta_9 \otimes \Delta_9 = \sum_{r=0}^4 \lambda^r$. Moreover, as
$\Delta_9$ is self-dual, we have the decomposition of $\Delta_9
\otimes \Delta_9 \cong \Delta_9^* \otimes \Delta_9 \cong
\mathfrak{gl}(\mathbb{R},16)$ into symmetric and skew-symmetric
components,
\begin{equation}\label{eq.3.1}
S^2(\Delta_9) = \lambda^0 \oplus \lambda^1 \oplus \lambda^4,
\quad \Lambda^2(\Delta_9) = \lambda^2 \oplus \lambda^3,
\end{equation}
where $\lambda^0$ is the center of $\mathfrak{gl}(16,\mathbb{R})$.

We have proved in Theorem \ref{th.1.1} that $\Omega^8_0$ is
$\mathrm{Spin}(9)$-invariant and non-trivial. We now prove that
$\rho(\mathrm{Spin}(9))\subset \mathrm{GL}(16,\mathbb{R})$
is actually the stabilizer group of $\Omega^8_0$
in the group $\mathrm{GL}(16,\mathbb{R})$,
showing that this group is no bigger than
$\rho(\mathrm{Spin}(9))$.

We have

\begin{theorem}\label{th.3.1}
The stabilizer group of the canonical $8$-form $\Omega^8_0$ on
$\mathbb{R}^{16}$, under the natural action of the group
$\mathrm{GL}(16,\mathbb{R})$, is the Lie group $\rho(\mathrm{Spin}(9))$.
\end{theorem}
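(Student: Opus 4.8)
The plan is to reduce the statement to a computation at the level of Lie algebras, exploiting the irreducible decomposition \eqref{eq.3.1}. Let $G\subseteq\mathrm{GL}(16,\mathbb{R})$ denote the stabilizer of $\Omega^8_0$; it is a closed, hence Lie, subgroup, and by Theorem~\ref{th.1.1} it contains the connected group $\rho(\mathrm{Spin}(9))$. Its Lie algebra is the kernel of the linear map $T\colon\mathfrak{gl}(16,\mathbb{R})\to\Lambda^8((\mathbb{R}^{16})^*)$, $T(A)=A\cdot\Omega^8_0$, where $(A\cdot\Omega)(v_1,\dots,v_8)=-\sum_{k=1}^8\Omega(v_1,\dots,Av_k,\dots,v_8)$ is the infinitesimal action. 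Since $\rho(\mathrm{Spin}(9))\subseteq G$ stabilizes $\Omega^8_0$, the map $T$ is $\mathrm{Spin}(9)$-equivariant, where $\mathrm{Spin}(9)$ acts on the source $\mathfrak{gl}(16,\mathbb{R})\cong\Delta_9\otimes\Delta_9$ by conjugation.

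By \eqref{eq.3.1} the source splits as $\lambda^0\oplus\lambda^1\oplus\lambda^2\oplus\lambda^3\oplus\lambda^4$ into five pairwise inequivalent irreducible $\mathrm{Spin}(9)$-modules, of dimensions $1,9,36,84,126$, with $\lambda^2=\rho_*(\mathfrak{spin}(9))$ the summand already known to lie in $\ker T$. Since the decomposition is multiplicity-free, $\ker T$ is a sum of a subfamily of these modules, and by Schur's lemma $T$ restricted to each irreducible summand is either zero or injective; hence it suffices to exhibit in each of $\lambda^0,\lambda^1,\lambda^3,\lambda^4$ a single element $A$ with $T(A)\neq0$. For $\lambda^0=\mathbb{R}\,\mathrm{I}$ this is immediate, since $T(\mathrm{I})=-8\,\Omega^8_0\neq0$. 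For the other three I would use the explicit representatives provided by the involutions: $\lambda^1$ is spanned by the symmetric traceless $I_i$, the skew module $\lambda^3$ by the $I_iI_jI_k$ $(i<j<k)$, and the symmetric module $\lambda^4$ by the $I_iI_jI_kI_l$ $(i<j<k<l)$. I would test these on the same $8$-tuple $X_0,\dots,X_7=(u_0,0),\dots,(u_7,0)$ used in the non-triviality part of Theorem~\ref{th.1.1}, where $\Omega^8_0(X_0,\dots,X_7)\neq0$, evaluating $T(I_i)$, $T(I_iI_jI_k)$ and $T(I_iI_jI_kI_l)$ by means of \eqref{eq.2.3}, \eqref{eq.2.8} and \eqref{eq.2.13}; one nonzero value per module then settles the case.

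Granting these non-vanishing checks, $\ker T=\lambda^2=\rho_*(\mathfrak{spin}(9))$, so the identity component satisfies $G^0=\rho(\mathrm{Spin}(9))$. To rule out further components, observe that any $g\in G$ normalizes $G^0=\rho(\mathrm{Spin}(9))$ and therefore induces an automorphism of $\mathrm{Spin}(9)$. As $\mathrm{Spin}(9)$ is simple of type $B_4$, its outer automorphism group is trivial, so this automorphism is inner, say conjugation by $\rho(s)$; then $\rho(s)^{-1}g$ centralizes $\rho(\mathrm{Spin}(9))$. Because the $16$-dimensional spin representation is irreducible and of real type, Schur's lemma gives commutant $\mathbb{R}\,\mathrm{I}$, whence $g=c\,\rho(s)$ for some $c\in\mathbb{R}^*$. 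Stabilizing an $8$-form forces $c^{-8}=1$, i.e.\ $c=\pm1$, and since $-\mathrm{I}=\rho(-1)\in\rho(\mathrm{Spin}(9))$ we conclude $g\in\rho(\mathrm{Spin}(9))$. Hence $G=\rho(\mathrm{Spin}(9))$.

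The main obstacle is precisely the middle step, namely verifying that the representatives of $\lambda^1$, $\lambda^3$ and $\lambda^4$ actually move $\Omega^8_0$. Although Schur's lemma reduces each case to producing a single nonzero value, $\Omega^8_0$ carries many mutually cancelling contributions (as the proof of Theorem~\ref{th.1.1} shows), so one must choose the evaluating $8$-tuple and organize the octonionic bookkeeping so that the relevant term is not accidentally killed; the largest module $\lambda^4$, represented by the fourfold products $I_iI_jI_kI_l$, is the most delicate to control.
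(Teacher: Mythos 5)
Your skeleton coincides with the paper's own: both reduce to the Lie algebra using the multiplicity-free decomposition \eqref{eq.3.1}, conclude that the stabilizer algebra $\mathfrak g$ must be a sum of some of the $\lambda^r$, and both settle connectedness by the identical normalizer argument (no outer automorphisms, Schur on the complexified spin representation, $t^8=1$, and $-\mathrm{I}=I_1I_2I_1I_2\in\rho(\mathrm{Spin}(9))$). But the actual content of the theorem is precisely the exclusion of $\lambda^1$, $\lambda^3$ and $\lambda^4$ from $\mathfrak g$, and this is exactly what you defer (``granting these non-vanishing checks''). As written, your text is a strategy plus an admission that the decisive step is missing; it is not yet a proof.

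Worse, the concrete plan you sketch for that missing step --- evaluating $T(I_{ijk})$ and $T(I_{ijkl})$ on the tuple $X_0=(u_0,0),\dots,X_7=(u_7,0)$ --- provably cannot succeed. In each summand $\omega_{ij}\wedge\omega_{ij'}\wedge\omega_{i'j}\wedge\omega_{i'j'}$ of $\Omega^8_0$, the index $8$ occurs in an even number ($0$, $2$ or $4$) of the four factors (if it occurs in exactly two slots of $(i,j,i',j')$ the summand vanishes outright); on the other hand, by \eqref{eq.2.3} the form $\varpi_{ab}$ pairs $V=\mathbb{O}\times\{0\}$ with itself only when $a,b\leqslant 7$, and pairs $V$ with $V'=\{0\}\times\mathbb{O}$ only when $8\in\{a,b\}$. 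Hence $\Omega^8_0$ vanishes whenever seven arguments lie in $V$ and one in $V'$, and since $\Omega^8_0|V$ is a top-degree form on $V$, one gets $T(A)(X_0,\dots,X_7)=-\operatorname{tr}(AP_V)\,\Omega^8_0(X_0,\dots,X_7)$ with $P_V=\tfrac12(\mathrm{I}-I_8)$, i.e.\ the evaluation only sees $\tfrac12\bigl(\operatorname{tr}(A)-\operatorname{tr}(AI_8)\bigr)$. For every $A\in\lambda^3\oplus\lambda^4$ both traces vanish: products of $r$ distinct $I_i$'s with $1\leqslant r\leqslant4$ are traceless (as the paper notes), and products of five distinct $I_i$'s reduce to complementary four-fold products because the volume element $I_0I_1\cdots I_8$ is central and equals $\pm\mathrm{I}$ in this irreducible representation. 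So your tuple detects only $\lambda^0$ and $\lambda^1$ (for the latter take $A=I_8$; this is the infinitesimal form of the paper's $M^t_8$ computation) and gives identically zero on all of $\lambda^3\oplus\lambda^4$, whatever representatives you choose. This is exactly why the paper does not argue by evaluation there: it excludes $\lambda^3$ topologically ($\mathfrak{so}(16)=\lambda^2\oplus\lambda^3$, and $\mathrm{SO}(16)$-invariance of $\Omega^8_0$ would produce a nonzero harmonic $8$-form on $S^{16}$, contradicting $H^8(S^{16},\mathbb{R})=0$), and excludes $\lambda^4$ algebraically ($\mathfrak g$ is a Lie algebra, but $\lambda^2\oplus\lambda^4$ is not closed under brackets, since $[\lambda^4,\lambda^4]\supset[\lambda^3,\lambda^3]\not\subset\lambda^2$ because $\mathrm{SO}(16)/\mathrm{Spin}(9)$ is not a symmetric space). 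To complete your argument you need these (or comparable) ideas; no choice of representatives evaluated at your chosen tuple will do it.
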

\begin{proof}
To simplify notation in this proof, we will write simply
$\mathrm{Spin}(9)$ and $\mathfrak{spin}(9)$
instead of $\rho(\mathrm{Spin}(9))$ and $\rho_*(\mathfrak{spin}(9))$,
respectively.
Let $G$ be the stabilizer group of $\Omega^8_0$ and $\mathfrak g$ its
Lie algebra.

As $\mathfrak{spin}(9)$ is a subalgebra of
$\mathfrak{gl}(16,\mathbb{R})$, the adjoint representation of
$\mathfrak{gl}(16,\mathbb{R})$ induces the representation of
$\mathfrak{spin}(9)$ on $\mathfrak{gl}(16,\mathbb{R})$. The set
$\{I_{i_1 \ldots i_r}, 0\leqslant i_1< \dotsb <i_r\leqslant 8\}$ is
a basis of the $\mathfrak{spin}(9)$-invariant subspace $\lambda^r$
of $\mathfrak{gl}(16,\mathbb{R})$ in \eqref{eq.3.1}, for
$r=1,\dotsc,4$, respectively. Moreover, all the operators in each
$\lambda^r$ are traceless (for example, $2I_{i_1i_2i_3i_4}$
$=[I_{i_1},I_{i_2i_3i_4}]$). As the submodules in  \eqref{eq.3.1}
are mutually not isomorphic, if
$\mathfrak{g}\not=\mathfrak{spin}(9)$, then
$\lambda^r\subset\mathfrak{g}$ for some $0\leqslant r\leqslant 4$.
We know that $\mathfrak{so}(16)=\lambda^2 \oplus \lambda^3$ and
$\mathfrak{spin}(9)=\lambda^2$ and it is clear that
$\lambda^0\not\subset\mathfrak{g}$.

Suppose then that $\lambda^1\subset\mathfrak{g}$. Then the one-parameter
subgroup
\[
M^t_8=\cosh t \cdot \mathrm{I} + \sinh t \cdot I_8
\subset \mathrm{GL}(16,\mathbb{R})
\]
generated by the vector $I_8\in\mathfrak{gl}(16,\mathbb{R})$, would be a
subgroup of $G$. It is easy to verify (see the proof
of~\eqref{eq.2.4}) that for any $0\leqslant i<j\leqslant 8$,
\begin{equation*}
(M^t_8)^*\varpi_{ij}=
\left\{
\begin{array}{ll}
\varpi_{ij},  &\text{if}\quad 8\in\{i,j\},\\
\noalign{\smallskip}
\cosh 2t\cdot \varpi_{ij}+\sinh 2t\cdot\bar\sigma_{ij8},
&\text{if}\quad i,j<8.
\end{array}
\right.
\end{equation*}
Let $V\subset\mathbb{O}^2$ be (as in the proof of Theorem \ref{th.1.1})
the subspace with basis $X_i=(u_i,0)$, $i=0,\dotsc,7$.
Then by~\eqref{eq.2.7}, we have $\varpi_{i8}|V=0$.
Further,
$\bar\sigma_{ij8}|V=-\varpi_{ij}|V$,
because by~\eqref{eq.2.3} one has
$I_8v=-v$ for all
$v\in V$. Now taking into account the expression for the
$8$-form
$\Omega^8_0$, we obtain that
$$
((M^t_8)^*\Omega^8_0)|V
= \sum_{\substack{0\leqslant i,j\leqslant 7\\ 0\leqslant i',j'\leqslant7}}
(\cosh 2t-\sinh 2t)^4 (\varpi_{ij} \wedge\varpi_{ij'}\wedge \varpi_{i'j}
\wedge \varpi_{i'j'})|V,
$$
i.e.\ $((M^t_8)^*\Omega^8_0)|V=(\cosh 2t-\sinh 2t)^4 \Omega^8_0|V$.
Thus $\lambda^1\not\subset\mathfrak{g}$, because $\Omega^8_0|V\not=0$.

The form $\Omega^8_0$ is not $\mathrm{SO}(16)$-invariant. In the
opposite case, it would determine a non-trivial
$\mathrm{SO}(17)$-invariant harmonic differential $8$-form on the
$16$-dimensional sphere $S^{16}$, but since
$H^8(S^{16},\mathbb{R})=0$, we would get a contradiction. Hence
$\lambda^3\not\subset\mathfrak{g}$.

So if $\mathfrak{g}\not=\mathfrak{spin}(9)$ then
$\mathfrak{g}=\lambda^4\oplus\mathfrak{spin}(9)$. It is clear that
$[\lambda^4,\lambda^4]\subset\mathfrak{so}(16)$ and, consequently, the
subspace $\lambda^4\oplus\mathfrak{spin}(9)$ is a Lie algebra if
and only if $[\lambda^4,\lambda^4] \subset\mathfrak{spin}(9)$. But
since $[I_k I_{i_1i_2i_3},I_k
I_{j_1j_2j_3}]=-[I_{i_1i_2i_3},I_{j_1j_2j_3}]$ for any $4$-element
subsets $\{k,i_1,i_2,i_3\}$ and $\{k,j_1,j_2,j_3\}$ of the set
$\{0,\dotsc,8\}$, we have
$[\lambda^3,\lambda^3]\subset[\lambda^4,\lambda^4]$. As the
homogeneous space $\mathrm{SO}(16)/\mathrm{Spin}(9)$ is not a
symmetric space (cf.\ Helgason \cite[p.\ 518]{Hel}), i.e.\
$[\lambda^3,\lambda^3]\not\subset \mathfrak{spin}(9)=\lambda^2$, we
obtain that $[\lambda^4,\lambda^4]\not\subset\mathfrak{spin}(9)$,
that is, $\mathfrak{g}=\mathfrak{spin}(9)$.

It only remains to be proved that the group $G$ is connected. To
this end, similarly to Brown and Gray in~\cite[Prop.\
5.3]{BroGra}, we shall find the normalizer (containing
$G$) of the group $\mathrm{Spin}(9)$ in
$\mathrm{GL}(16,\mathbb{R})$. Suppose that
$A\in \mathrm{GL}(16,\mathbb{R})$ normalizes
$\mathrm{Spin}(9)$. Since
$\mathrm{Spin}(9)$ has no outer
automorphisms there exists an element
$B\in\mathrm{Spin}(9)$ such that
$AB^{-1}$ is in the centralizer in
$\mathrm{GL}(16,\mathbb{R})$ of
$\mathrm{Spin}(9)$. The complexification of the
$16$-dimensional representation of
$\mathrm{Spin}(9)$ is irreducible so
$AB^{-1}$ is a scalar operator $tI$, $t\in\mathbb{R}$. But the operator
$tB$ preserves the 8-form if and only if $t^8=1$.
Since by definition
$\mathrm{Spin}(9)$ contains
$I_1I_2I_1I_2=-I$, we have
$G = \mathrm{Spin}(9)$.
This completes the proof.
\end{proof}

As a consequence of Theorem \ref{th.3.1} we have

\begin{corollary}
A reduction of the structure group of the bundle
of oriented orthonormal frames of a connected, oriented
$16$-dimensional Riemannian manifold $M$ to
$\mathrm{Spin}(9)$ is characterized by a parallel
$8$-form
$\Omega^8$ which is linearly equivalent at each point
$p \in M$ to the
$\mathrm{Spin}(9)$-invariant
$8$-form
$\Omega^8_0$ on $\mathbb{R}^{16}$.
\end{corollary}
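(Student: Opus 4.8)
The plan is to read this off from Theorem~\ref{th.3.1} combined with the standard theory of $G$-structures defined by a tensor recalled at the start of this section. Theorem~\ref{th.3.1} identifies $\rho(\mathrm{Spin}(9))$ with the full $\mathrm{GL}(16,\mathbb{R})$-stabilizer of $\Omega^8_0$, and since $\rho(\mathrm{Spin}(9))\subset\mathrm{SO}(16)$ this stabilizer already lies inside $\mathrm{SO}(16)$, so it equals the $\mathrm{SO}(16)$-stabilizer as well. The argument then splits into two ingredients: first a bijection between $\mathrm{Spin}(9)$-reductions of $\mathrm{SO}(M)$ and $8$-forms that are pointwise linearly equivalent to $\Omega^8_0$, and then the equivalence between the reduction being invariant under the Levi-Civita connection and the defining form being parallel.

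For the bijection I would argue both directions. Given a reduction $P\subset\mathrm{SO}(M)$ with group $\rho(\mathrm{Spin}(9))$, each frame $u\in P_p$ is an oriented orthonormal isomorphism $T_pM\to\mathbb{R}^{16}$, and I set $\Omega^8_p=u^{*}\Omega^8_0$. Because any two frames in $P_p$ differ by an element of $\rho(\mathrm{Spin}(9))$, which fixes $\Omega^8_0$ by its $\mathrm{Spin}(9)$-invariance (Theorem~\ref{th.1.1}), the value $\Omega^8_p$ is well defined, globally smooth, and pointwise linearly equivalent to $\Omega^8_0$; this is exactly the canonical $8$-form of Theorem~\ref{th.1.1}. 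Conversely, given an $8$-form $\Omega^8$ on $M$ pointwise equivalent to $\Omega^8_0$ and compatible with $g$, I would define
\[
P_p=\{\,u\colon T_pM\to\mathbb{R}^{16}\ \text{oriented orthonormal}:\ u^{*}\Omega^8_0=\Omega^8_p\,\}.
\]
By Theorem~\ref{th.3.1} the set of frames satisfying this condition is a single $\rho(\mathrm{Spin}(9))$-orbit, so $P$ is a principal subbundle of $\mathrm{SO}(M)$ with structure group $\rho(\mathrm{Spin}(9))$, i.e.\ a $\mathrm{Spin}(9)$-structure. These two constructions are inverse to one another.

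For the word \emph{parallel} I would invoke the holonomy principle for a $G$-structure defined by a tensor: parallel transport along any curve from $p$ to $q$ by the Levi-Civita connection $\nabla$ carries $P_p$ into $P_q$ precisely when it preserves the defining relation $u^{*}\Omega^8_0=\Omega^8$, that is, precisely when $\nabla\Omega^8=0$. Hence the reduction is invariant under $\nabla$ (equivalently, the holonomy is contained in $\rho(\mathrm{Spin}(9))$) if and only if $\Omega^8$ is parallel, which is the assertion of the corollary.

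The step requiring most care is matching the $\mathrm{GL}(16,\mathbb{R})$-level statement of Theorem~\ref{th.3.1} with the orthonormal, oriented setting, so that the fibres $P_p$ defined above are nonempty and reduce to a single $\rho(\mathrm{Spin}(9))$-orbit. Here the key remark is that $\rho(\mathrm{Spin}(9))$ is a compact group acting irreducibly on $\mathbb{R}^{16}$ (the spin module $\Delta_9$), so up to scale it preserves a unique inner product; consequently $\Omega^8_0$ pins down the standard metric and orientation, and a form $\Omega^8$ pointwise equivalent to $\Omega^8_0$ determines the Riemannian data it is compatible with. Granting Theorem~\ref{th.3.1}, this compatibility is the only nontrivial point, and it guarantees that $P$ is a genuine reduction of $\mathrm{SO}(M)$.
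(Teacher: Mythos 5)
Your proposal is correct, but it follows a genuinely different route from the paper's. The paper disposes of this corollary in two lines: by Brown and Gray's general results on structures defined by invariant forms (Props.~5.2, 5.4, 5.5 of \cite{BroGra}), the whole statement reduces to checking that $\Omega^8_0$ is $\mathrm{Spin}(9)$-invariant but \emph{not} $\mathrm{SO}(16)$-invariant; the first fact is Theorem~\ref{th.1.1}, and the second was already established inside the proof of Theorem~\ref{th.3.1} (an $\mathrm{SO}(16)$-invariant $8$-form would induce a nonzero harmonic $8$-form on $S^{16}$, contradicting $H^8(S^{16},\mathbb{R})=0$). In particular, the paper never needs the full strength of Theorem~\ref{th.3.1} here. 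You instead rebuild the Brown--Gray correspondence directly from that theorem: the exact stabilizer computation shows that the oriented orthonormal frames pulling $\Omega^8_0$ back to $\Omega^8_p$ form a single $\rho(\mathrm{Spin}(9))$-orbit, and the holonomy principle converts ``parallel'' into invariance of the reduction under parallel transport. Your version is more self-contained and makes explicit a point the paper leaves buried in the citation: since $\rho(\mathrm{Spin}(9))$ is compact and acts irreducibly on $\mathbb{R}^{16}$, a form pointwise linearly equivalent to $\Omega^8_0$ determines its own metric and orientation, so compatibility with $g$ is a well-posed condition and is exactly what makes your fibres $P_p$ nonempty --- without it the correspondence with reductions of $\mathrm{SO}(M)$, rather than of some other frame bundle, would fail. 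The price is that you still appeal to standard $G$-structure theory for the remaining steps (smoothness of $P$ as a principal subbundle, the holonomy principle), which is precisely the material the paper outsources to \cite{BroGra}; the two arguments thus rest on comparable external inputs, with yours pulled apart into its elementary pieces and the paper's packaged as a single reference.
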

\begin{proof}
According to \cite[Props.\ $5.2$, $5.4$, $5.5$]{BroGra} we must
only prove that $\Omega^8_0$ is $\mathrm{Spin}(9)$-invariant but not
$\mathrm{SO}(16)$-invariant. We have proved the first fact in Theorem
\ref{th.1.1} and the second one in the proof of
Theorem~\ref{th.3.1}.
\end{proof}

\section{The curvature tensor of the Cayley planes}
\setcounter{equation}{0}

We now apply our previous conclusions to obtain an expression of
the curvature tensor of the Cayley planes in terms of the nine
local symmetric involutions involved and then to relate it to the
well-known expression in terms of triality given by Brown and Gray
\cite{BroGra}, to the one in terms of the brackets of the Lie
algebra $\mathfrak{f}_4$ of $F_4$, furnished by Brada and
P\'ecaut-Tison \cite{BraPec0,BraPec}, and also to the expression
given in \cite{Myk2}.

First recall (\cite{Ale,BroGra}) that the curvature tensor
$R$ of a non-flat
$\mathrm{Spin}(9)$-manifold is a
non-zero multiple of the curvature tensor
$R^{\operatorname{\mathbb O \mathrm P}(2)}$ of
$\operatorname{\mathbb O \mathrm P}(2)$. Further, as duality
reverses curvature, in the next formulas we can take a constant
$c \in \mathbb{R} \backslash \{0\}$, being understood that
$c>0$ (resp.\ $c<0$) in
the compact (resp.\ noncompact) case.

Then we have
\begin{proposition}
The curvature tensor $R_{XY}Z$ of
the Cayley planes is given by
\begin{equation}
\label{eq.4.1}
R_{XY}Z =-\frac{c}4\sum_{0 \leqslant i < j \leqslant 8}
\omega_{ij}(X,Y)I_{ij}Z,
\qquad c \in \mathbb{R} \backslash \{0\}.
\end{equation}
\end{proposition}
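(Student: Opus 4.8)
The plan is to exploit the fact that the Cayley planes are the symmetric spaces $\operatorname{\mathbb O \mathrm P}(2)\cong F_4/\mathrm{Spin}(9)$ and $\operatorname{\mathbb O \mathrm H}(2)\cong F_{4(-20)}/\mathrm{Spin}(9)$, whose curvature at the base point $p$ is governed by the Lie bracket of $\mathfrak{g}=\mathfrak{k}\oplus\mathfrak{m}$, where $\mathfrak{k}=\rho_*(\mathfrak{spin}(9))$ and $\mathfrak{m}\cong T_pM\cong\mathbb{O}^2$. For a Riemannian symmetric space one has $R_{XY}Z=-[[X,Y],Z]$ for $X,Y,Z\in\mathfrak{m}$ (up to the sign convention for $R$, which is absorbed into the sign of $c$), the inner bracket $[X,Y]$ lying in $\mathfrak{k}$ and the outer one being the infinitesimal isotropy action, which on $\mathfrak{m}$ is precisely the action of the endomorphisms generated by the $I_i$. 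Since $\{I_{ij}:0\leqslant i<j\leqslant 8\}$ is a basis of $\rho_*(\mathfrak{spin}(9))$ (as recalled before Proposition~\ref{pr.2.1}), everything reduces to expanding $[X,Y]$ in this basis.

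To carry out the expansion I would fix an $\mathrm{Ad}(G)$-invariant bilinear form $B$ on $\mathfrak{g}$ (the Killing form), whose restriction to $\mathfrak{m}$ is a nonzero multiple $\lambda g$ of the metric. The operators $I_{ij}$ are mutually $B$-orthogonal and of equal $B$-norm, because under $\mathrm{Ad}(\mathrm{Spin}(9))$ they are permuted exactly as the orthonormal bivectors $e_i\wedge e_j$ are permuted by $\mathrm{SO}(9)$ on $\mathfrak{so}(9)$. Hence
\[
[X,Y]=\sum_{0\leqslant i<j\leqslant 8}\frac{B([X,Y],I_{ij})}{B(I_{ij},I_{ij})}\,I_{ij},
\]
and, using the $\mathrm{ad}$-invariance of $B$ together with $[I_{ij},Z]=I_{ij}Z$ for $Z\in\mathfrak{m}$, one finds $B([X,Y],I_{ij})=B(Y,I_{ij}X)=-\lambda\,\omega_{ij}(X,Y)$. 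Therefore $[X,Y]=\kappa\sum_{i<j}\omega_{ij}(X,Y)I_{ij}$ for a single constant $\kappa$, and applying the isotropy action gives $R_{XY}Z=-\kappa\sum_{i<j}\omega_{ij}(X,Y)I_{ij}Z$, which is the asserted formula.

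As a consistency check, and as an alternative, more invariant route, I would note that the right-hand side defines a genuine algebraic curvature tensor: it is skew in $X,Y$ since the $\omega_{ij}$ are $2$-forms, the associated $(0,4)$-tensor $-\tfrac{c}{4}\sum_{i<j}\omega_{ij}(X,Y)\omega_{ij}(Z,W)$ is skew in $Z,W$ and symmetric under $(X,Y)\leftrightarrow(Z,W)$, and it satisfies the first Bianchi identity by Corollary~\ref{co.2.6}, i.e.\ exactly \eqref{eq.2.14}. Its curvature operator on $\Lambda^2(\Delta_9)$ is proportional to the orthogonal projection onto $\lambda^2=\rho_*(\mathfrak{spin}(9))$ in the decomposition $\Lambda^2(\Delta_9)=\lambda^2\oplus\lambda^3$ of \eqref{eq.3.1}. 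Since the curvature of a symmetric space is parallel, its curvature operator is $\mathrm{Spin}(9)$-invariant, hence by Schur's lemma (the summands $\lambda^2,\lambda^3$ being irreducible and inequivalent) equal to a combination $a\,\mathrm{pr}_{\lambda^2}+b\,\mathrm{pr}_{\lambda^3}$; and because the image of the curvature operator of a symmetric space is its holonomy algebra $[\mathfrak{m},\mathfrak{m}]=\mathfrak{spin}(9)=\lambda^2$, one must have $b=0$. This again pins $R$ to a multiple of the stated tensor.

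The main obstacle is fixing the precise constant $-c/4$ rather than merely the proportionality. Here $c$ is a free scaling parameter whose sign alone is prescribed (positive in the compact, negative in the noncompact case), so the genuine content is the proportionality and the sign pattern; the numerical factor $1/4$ must be calibrated against an independent normalization. I would fix it either by computing $g(R_{XY}Y,X)=-\tfrac{c}{4}\sum_{i<j}\omega_{ij}(X,Y)^2$ for an orthonormal pair and matching the extremal sectional curvatures of $\operatorname{\mathbb O \mathrm P}(2)$, or, following the declared aim of this section, by comparing a single component with the triality formula of Brown and Gray \cite{BroGra} or the expression of \cite{Myk2}. Tracking $\lambda$ and $B(I_{ij},I_{ij})$ through the bracket computation, together with the conventions for the sign of $R$ and for the $\Lambda^2$ inner product, is the only delicate bookkeeping.
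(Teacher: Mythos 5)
Your proposal is correct, but your primary route is genuinely different from the paper's. The paper proceeds in three steps: (i) it observes that $\lambda\sum_{i<j}\omega_{ij}\otimes I_{ij}$ is a $\rho_*(\mathfrak{spin}(9))$-valued $2$-form satisfying the algebraic curvature identities, with the first Bianchi identity supplied exactly by Corollary~2.6, i.e.\ by \eqref{eq.2.14}; (ii) it invokes the Alekseevsky--Brown--Gray fact recalled at the head of Section~4, that the curvature tensor of a non-flat $\mathrm{Spin}(9)$-manifold is a non-zero multiple of $R^{\operatorname{\mathbb O \mathrm P}(2)}$, to conclude proportionality; (iii) it pins down the coefficient by an explicit octonionic computation: using transitivity of $\mathrm{Spin}(9)$ on $S^{15}$ it reduces to $v=(u_0,0)$, $w'=\sum_k\bigl(\mu_k(u_k,0)+\nu_k(0,u_k)\bigr)$, finds $R_{vwvw}=-\lambda\bigl(3\sum_k\mu_k^2+1\bigr)$ by counting the pairs $\{u_i,u_j\}$ with $u_i(u_ju_k)=\pm u_0$, and so obtains the pinching $[\,|c|/4,|c|\,]$ with $\lambda=-c/4$. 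Your main argument instead derives the formula structurally from the symmetric pair $\mathfrak{f}_4=\mathfrak{spin}(9)\oplus\mathfrak{m}$: expanding $[X,Y]$ in the $B$-orthogonal, equal-norm basis $\{I_{ij}\}$ via ad-invariance of the Killing form gives the proportionality directly, with Bianchi coming for free from the Jacobi identity, so you need neither Corollary~2.6 nor the rigidity citation; the price is that you must justify the standard facts about the symmetric decomposition (isotropy action $[I_{ij},Z]=I_{ij}Z$, $B|_{\mathfrak m}=\lambda g$ by irreducibility, equal norms of the $I_{ij}$, all of which you handle correctly). Your secondary route (Schur's lemma on the curvature operator relative to $\Lambda^2(\Delta_9)=\lambda^2\oplus\lambda^3$, plus the holonomy-image argument forcing $b=0$) is in effect a self-contained proof of the rigidity statement that the paper simply cites, and is a nice complement. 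On the constant: you correctly identify that only the sign of $c$ has invariant meaning and that the factor $1/4$ is a normalization to be calibrated against the extremal sectional curvatures; this calibration is precisely the content of the paper's step (iii), which you defer but describe accurately, so nothing essential is missing.
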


\begin{proof}
The form $\lambda\sum_{i<j} \omega_{ij} \otimes I_{ij}$,
$\lambda\in{\mathbb R}$, is a
$\rho_*(\mathfrak{spin}(9))$-valued $2$-form.
Moreover, the necessary algebraic conditions are clearly satisfied by
$\lambda\sum_{i<j} \omega_{ij} \otimes \omega_{ij}$,
except for the Bianchi identity, but this is immediate from
equation \eqref{eq.2.14}.

As the curvature tensor is a non-zero multiple of
$R^{\operatorname{\mathbb O \mathrm P}(2)}$, it only rests to find
the coefficient of the right-hand side of \eqref{eq.4.1}. To
compute the sectional curvature we take two orthonormal vectors
$v=(x_1,x_2),w=(y_1,y_2) \in S^{15} \subset T_pM \equiv
\mathbb{O}^2$. Now, the map $(v,w)\mapsto-\lambda\sum_{0 \leqslant
i<j \leqslant 8} \omega_{ij}^2(v,w) $ is easily seen
from~\eqref{eq.2.4} to be invariant under each endomorphism
$M^t_{kl}$, hence under $\mathrm{Spin}(9)$. Consider then the
orthonormal basis $e_1=(u_0,0),\dotsc,$ $e_8=(u_7,0),e_9=(0,u_0)$,
$\dotsc,e_{16}=(0,u_7)$ of $\mathbb{O}^2 \equiv T_pM$. As
$\mathrm{Spin}(9)$ acts transitively on $S^{15}$, there exists an
element of $\mathrm{Spin}(9)$ mapping $v$ to $(u_0,0)$ and $w$ to a
vector $w'= \sum_{k=0}^7 \bigl(\mu_k(u_k,0) + \nu_k(0,u_k)\bigr)$
with $\mu_0=0$. So for certain $\lambda \in \mathbb{R}\backslash
\{0\}$, as a computation using~\eqref{eq.2.2} and \eqref{eq.2.3}
shows, we have for $R_{vwvw}=g(R_{vw}v,w)$ that
\[
R_{vwvw} = - \lambda\sum_{\substack{0\leqslant
i<j\leqslant 8\\ 0\leqslant k \leqslant 7}}
\bigl\langle (u_0,0), I_{ij}
\bigl( \mu_k (u_k,0) + \nu_k (0,u_k)\bigr) \bigr\rangle^2
=-\lambda\Bigl( 3\sum_{k=0}^7 \mu_k^2+1 \Bigr).
\]
In fact, the operator $I_{ij}$ acts on the basis
$\{(u_k,0),(0,u_k),k=0,\ldots, 7\}$ as a permutation (up to sign) and
for each vector $(u_k,0)$, $k\geqslant1$, there exist precisely
four different pairs $\{u_i,u_j\}$ for which $u_i(u_ju_k)
\stackrel{\mp}{=} (u_iu_j)u_k\stackrel{\mp}{=}u_0$ and for each
vector $(0,u_k)$, $k\geqslant0$ there exists a unique pair
$\{u_i,u_8\}$ for which $u_iu_k\stackrel{\mp}{=}u_0$ ($i=k$ in this
case), where $\stackrel{\mp}{=}$ means ``equal up to sign.''

Taking $\lambda=-\frac c4$, we see that the
absolute value of the sectional
curvature belongs to $[|c|/4,|c|]$.
\end{proof}

Brown and Gray give in~\cite[(6.12)]{BroGra} an explicit
expression for the curvature tensor
$R_{XY}Z$ of $\operatorname{\mathbb O \mathrm P}(2)$.

Letting
$\mathbb{R}^{16} \equiv \mathbb{O}^2$, according to Lemma 3.1 and formulas
$(4.1)$, $(4.2)$, and $(6.2)$ in their paper, and only changing
some notations, Brown and Gray's formula for the curvature tensor
can be written as $R_{XY}Z=S_{XY}Z-S_{YX}Z$, where
\begin{equation}\label{eq.4.2}
\begin{split}
S_{XY}Z
=-\frac{c}4\bigl(&4\langle y_1,z_1\rangle x_1
+(z_1y_2)\bar x_2+(x_1y_2)\bar z_2, \\
&4\langle y_2,z_2\rangle x_2+
\bar x_1(y_1z_2)+\bar z_1(y_1x_2) \bigr),
\end{split}
\end{equation}
for  $X=(x_1,x_2)$, $Y=(y_1,y_2)$, $Z=(z_1,z_2)\in \mathbb{O}^2$.

They also comment that an expression `similar' to the well-known
ones for the spaces of constant either holomorphic or quaternionic
sectional curvature cannot be given, because, differently to
$\mathrm{U}(n)$ and
$\mathrm{Sp}(n)\mathrm{Sp}(1)$, the group
$\mathrm{Spin}(9)$ has not proper normal subgroups.

However, in \cite[Prop.\ 4]{Myk2} a simple expression for either
$R^{\operatorname{\mathbb O \mathrm P}(2)}$ or
$R^{\operatorname{\mathbb O \mathrm H}(2)}$ has been given in
terms of the nine local symmetric
operators. We can write it as
$R_{XY}Z=S^\prime_{XY}Z-S^\prime_{YX}Z$, where
\begin{equation}\label{eq.4.3}
S^{\prime}_{XY}Z =-\frac{c}4\Bigl(
3g(Y,Z)X + \sum_{0\leqslant i\leqslant 8} g(I_iY,Z)I_iX \Bigr),
\end{equation}
respectively.

This expression, in terms of the octonion algebra
has the following form (see~\cite[Prop.\ 4,(15)]{Myk2}) for
$X=(x_1,x_2)$, $Y=(y_1,y_2)$ and $Z=(z_1,z_2)$,
\begin{equation}\label{eq.4.4}
\begin{split}
S^\prime_{XY}Z=-\frac{c}4
\bigl(&(x_1{\bar y_1})z_1+(x_1y_2){\bar z_2}
+(z_1{\bar y_1})x_1 + (z_1y_2){\bar x_2}, \\
&\; {\bar z_1}(y_1x_2)+z_2({\bar y_2}x_2)
+x_2({\bar y_2}z_2)+{\bar x_1}(y_1z_2)\bigr).
\end{split}
\end{equation}

Using the well-known octonion identities
$\langle x,y\rangle=\langle\bar x,\bar y\rangle$ and
$2\langle x,y\rangle a= (ax)\bar y+(ay)\bar x$ and
their conjugated $2\langle x,y\rangle a= y(\bar xa)+x(\bar ya)$
for arbitrary $x,y,a\in \mathbb{O}$
(see~\cite[Lect. 15, (1)]{Pos}), we obtain that
\begin{align*}
4\langle y_1,z_1\rangle x_1-4\langle x_1,z_1 \rangle y_1
     & = (z_1\bar y_1 + y_1\bar z_1)x_1
+ (x_1\bar y_1)z_1 + (x_1\bar z_1)y_1 \\
     & \quad\; -(z_1\bar x_1+x_1\bar z_1)y_1
- (y_1\bar x_1)z_1-(y_1\bar z_1)x_1 \\
     & =(x_1{\bar y_1}-y_1{\bar x_1)}z_1
+(z_1{\bar y_1})x_1-(z_1{\bar x_1})y_1,
\end{align*}
and
\begin{align*}
4\langle y_2,z_2\rangle x_2-4\langle x_2,z_2 \rangle y_2
   & = x_2(\bar y_2z_2+\bar z_2y_2)+z_2(\bar y_2x_2) +y_2(\bar z_2x_2) \\
   & \quad\; -y_2(\bar x_2z_2+\bar z_2x_2) - z_2(\bar x_2y_2)-x_2(\bar z_2y_2) \\
   & = z_2(\bar y_2x_2-\bar x_2y_2)+x_2(\bar y_2z_2)-y_2(\bar x_2z_2),
\end{align*}
i.e.\ the expressions $S_{XY}Z-S_{YX}Z$ and $S^\prime_{XY}Z-S^\prime_{YX}Z$
coincide.

Brada and P\'ecaut-Tison's \cite{BraPec} expression for the
curvature tensor in terms of a ``cross product,'' coincides with
Brown and Gray's expression up to a factor $-c/4$
(cf.\ Remark in \cite[p.\ 145]{BraPec}, given without proof). To
prove that both expressions coincide it suffices to use the
property $(a,b,c)=-(\bar a,b,c)$ of the associator of $a,b,c\in
\mathbb{O}$, and four different expressions for $2\langle x,y\rangle a$
given above.

Then we have

\begin{proposition}
The curvature tensor of the Cayley planes is given by either
the expression \eqref{eq.4.1} or any of those obtained
using \eqref{eq.4.2}, \eqref{eq.4.3}, or \eqref{eq.4.4}.

Moreover, one has
\begin{equation}
\label{eq.4.5}
R_{XY}Z = \frac15 \sum_{0\leqslant j\leqslant 8} I_jR_{XY}I_jZ.
\end{equation}
\end{proposition}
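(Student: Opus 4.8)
The plan is to substitute the explicit formula \eqref{eq.4.1} into the right-hand side of \eqref{eq.4.5} and reduce the whole identity to a single algebraic computation involving the operators $I_j$. Writing $R_{XY}Z = -\frac c4\sum_{0\leqslant i<k\leqslant 8}\omega_{ik}(X,Y)I_{ik}Z$ and noting that the scalar coefficients $\omega_{ik}(X,Y)$ do not depend on $Z$, the operation $Z\mapsto\sum_{j}I_jR_{XY}I_jZ$ passes through the sum over $i<k$, so that
\[
\sum_{j=0}^8 I_jR_{XY}I_jZ
= -\frac c4\sum_{0\leqslant i<k\leqslant 8}\omega_{ik}(X,Y)
\Bigl(\sum_{j=0}^8 I_jI_{ik}I_j\Bigr)Z.
\]
Thus everything reduces to evaluating the operator $\sum_{j=0}^8 I_jI_{ik}I_j$ for a fixed pair $i<k$.

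First I would compute $I_jI_{ik}I_j = I_jI_iI_kI_j$ by a short case analysis using only the Clifford relations \eqref{eq.2.1}. For each of the seven indices $j\notin\{i,k\}$, the involution $I_j$ anticommutes with both $I_i$ and $I_k$; moving $I_j$ to the right through $I_i$ and $I_k$ and using $I_j^2=\mathrm{I}$ gives $I_jI_iI_kI_j = I_iI_k = I_{ik}$. For $j=i$ (and symmetrically $j=k$), using $I_i^2=\mathrm{I}$ and $I_iI_k=-I_kI_i$ one finds instead $I_jI_{ik}I_j = -I_{ik}$. Summing over all nine values of $j$ therefore yields
\[
\sum_{j=0}^8 I_jI_{ik}I_j = 7\,I_{ik} - 2\,I_{ik} = 5\,I_{ik}.
\]

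Substituting this back gives $\sum_{j=0}^8 I_jR_{XY}I_jZ = 5\,R_{XY}Z$, which is exactly \eqref{eq.4.5}. The only mildly delicate point, and the one I would check most carefully, is the sign bookkeeping in the case analysis for $I_jI_{ik}I_j$: the seven ``off-diagonal'' indices $j\notin\{i,k\}$ each contribute $+I_{ik}$, whereas the two indices $j\in\{i,k\}$ each contribute $-I_{ik}$, and it is precisely this count $7-2=5$ that produces the factor $\tfrac15$ in the statement. Once this is confirmed the identity is immediate; notice that the argument uses only \eqref{eq.4.1} together with the Clifford relations \eqref{eq.2.1}, and does not require the octonionic descriptions \eqref{eq.4.2} or \eqref{eq.4.4} at all.
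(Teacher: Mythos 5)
Your computation establishes the second assertion, \eqref{eq.4.5}, correctly, and by essentially the same algebra as the paper: the identity $\sum_{j=0}^{8} I_jI_{ik}I_j = 5\,I_{ik}$, with the count $7-2=5$ over the nine indices, is exactly what the paper invokes. The only difference in that step is the justification that $R_{XY}$ lies in the span of the operators $I_{kl}$: you take it from the explicit formula \eqref{eq.4.1} (legitimate, since that is proved in the preceding proposition), whereas the paper deduces it from the fact that the isotropy representation of the Cayley plane is the $16$-dimensional spin representation, so that the curvature operator takes values in $\rho_*(\mathfrak{spin}(9))=\operatorname{span}\{I_{kl}\}$. This difference matters for the paper's logic, because there \eqref{eq.4.5} is proved \emph{independently} of \eqref{eq.4.1} and is then used as a tool to relate \eqref{eq.4.1} to the other expressions.

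The genuine gap is that the first assertion of the proposition is never addressed. The statement claims that \eqref{eq.4.1} and the expressions obtained from \eqref{eq.4.2}, \eqref{eq.4.3}, \eqref{eq.4.4} all define the same curvature tensor, and this equivalence is the bulk of the paper's proof: the equivalence of \eqref{eq.4.3} with \eqref{eq.4.4} is quoted from \cite{Myk2}, that of \eqref{eq.4.2} with \eqref{eq.4.4} is established just before the proposition, and the remaining equivalence of \eqref{eq.4.3} with \eqref{eq.4.1} is proved by expanding $5R_{XY}Z=\sum_j I_jR_{XY}I_jZ$ with $R_{XY}$ given by \eqref{eq.4.3}, simplifying to $R_{XY}Z=\frac{c}{4}\sum_{i<j}\bigl(g(Y,I_{ij}Z)I_{ij}X-g(X,I_{ij}Z)I_{ij}Y\bigr)$, and converting this into $-\frac{c}{4}\sum_{i<j}\omega_{ij}(X,Y)I_{ij}Z$ by means of the cyclic identity \eqref{eq.2.14} of Corollary~\ref{co.2.6} (the paper also gives a second route via Friedrich's formulas \eqref{eq.4.6} and \eqref{eq.4.7}). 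Your closing remark that the argument ``does not require the octonionic descriptions \eqref{eq.4.2} or \eqref{eq.4.4} at all'' is precisely the symptom: those descriptions are what the first half of the statement is about, and nothing in your proposal shows that they agree with \eqref{eq.4.1}.
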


\begin{proof}
Since the equivalence of the expressions~\eqref{eq.4.3} and~\eqref{eq.4.4}
was proved in \cite{Myk2} and the equivalence of
the curvature tensors defined by~\eqref{eq.4.2} and \eqref{eq.4.4}
was established above, only the equivalence of the curvature tensor
defined by either \eqref{eq.4.3} or
\eqref{eq.4.4} with the curvature tensor~\eqref{eq.4.1} remains to be proved.
We now prove this in two ways.

We know that the operator $R_{XY}$ is a linear combination of the
operators $I_{kl}$, $0\leqslant k<l\leqslant 8$, as the  isotropy
representation $\mathfrak{spin}(9) \to \mathrm{End}(T_pM)$ is the
$16$-dimensional spin representation of $\mathfrak{spin}(9)$. Since
for any fixed pair $kl$, $k,l=0,\dots,8$, $k\not=l$,
by~\eqref{eq.2.1} one has $\sum_{0\leqslant j\leqslant
8}I_jI_kI_lI_j = 5\,I_kI_l$, we get the formula \eqref{eq.4.5}.

On account of~\eqref{eq.4.3} and  \eqref{eq.4.5} we then obtain
that
\begin{align*}
5R&_{XY}Z  = \sum_{0\leqslant j\leqslant 8}I_jR_{XY}I_j Z \\
 & =-\frac{c}4\Bigl(3\sum_{0\leqslant j\leqslant 8}
 \bigl\{ g(I_jY,Z)I_j X - g(I_j X,Z)I_j Y \bigr\}
+ 9 \bigl\{ g(Y,Z)X - g(X,Z)Y \bigr\} \\
 & \qquad\quad\; + 2\sum_{0\leqslant i<j\leqslant 8}
\bigl\{ g(X,I_{ij}Z)I_{ij}Y - g(Y,I_{ij}Z)I_{ij} X \bigr\} \Bigr).
\end{align*}
Again using \eqref{eq.4.3} we then have
\[
R_{XY}Z=\frac{c}4\sum_{0\leqslant i<j\leqslant 8}
\bigl(g(Y,I_{ij}Z)I_{ij} X -
g(X,I_{ij}Z)I_{ij} Y \bigr),
\]
hence by virtue of Corollary~\ref{co.2.6} we deduce that
\[
R_{XY}Z =-\frac{c}4\sum_{0\leqslant i<j\leqslant 8} g(X,I_{ij}Y)I_{ij} Z,
\]
i.e.\ formula \eqref{eq.4.1}.

We can also prove the equivalence of the curvature
tensor~\eqref{eq.4.1} and that defined by~\eqref{eq.4.3}
considering for any vector fields $X,Y$ and the basis
of $2$-forms $\{\omega_{ij}, \sigma_{ijk}\}$ being
as in Section \ref{se.1},
Friedrich's expression \cite[Lemma 3.2]{Fri2}
\begin{equation}\label{eq.4.6}
8 \, X^\flat \wedge Y^\flat = \sum_{0\leqslant i<j\leqslant 8}
\omega_{ij}(X,Y)\omega_{ij}
+ \sum_{0\leqslant i<j<k\leqslant 8} \sigma_{ijk}(X,Y) \sigma_{ijk},
\end{equation}
where $X^\flat$ and
$Y^\flat$ denote the differential
$1$-forms metrically dual to $X$ and $Y$,
respectively. From \eqref{eq.4.6}, as a simple computation shows,
we obtain the formula
\begin{equation}\label{eq.4.7}
8\, \sum_{0\leqslant l\leqslant 8} (I_lX)^\flat \wedge (I_lY)^\flat
= 5 \sum_{0\leqslant i<j\leqslant 8 } \omega_{ij}(X,Y)\omega_{ij} - 3 \sum_{0\leqslant i<j<k\leqslant 8} \sigma_{ijk}(X,Y)\sigma_{ijk}.
\end{equation}
From equations \eqref{eq.4.6} and \eqref{eq.4.7} one easily
concludes.
\end{proof}

We omit for the sake of brevity the discussions
corresponding to the three next questions.

\begin{remark}
Another (longer but equivalent) expression in terms of
the operators $I_j$ for the curvature tensor
of the Cayley planes has been given in \cite[(4.18)]{Myk1}.
\end{remark}

\begin{remark}
Hangan gave in \cite[pp.\ 68--69]{Han}
another expression for the curvature tensor of
$\operatorname{\mathbb O \mathrm P}(2)$, this space viewed as a
differentiable manifold with three charts as in Besse \cite[p.\
91]{Bes}. The relation of his expression with those given above
remains as an open problem.
\end{remark}

\begin{remark}
The canonical metric on the open unit ball model of
$\operatorname{\mathbb O \mathrm P}(2)$, with
\[
B^2 = \{ (u,v) \in \mathbb{O}^2 : |u|^2 + |v|^2 < 1 \}
\]
has been recently found by Held, Stavrov and Van
Koten in \cite[Sect.\ 8]{HelStaKot}. It is given by
\[
g = \frac{c}4 \frac{|\mathrm{d} u|^2(1 - |v|^2) + |\mathrm{d} v|^2(1 - |u|^2)
+ 2\,\mathrm{Re}\bigl( u\bar v(\mathrm{d} v \,\mathrm{d} \bar u) \bigr)}
{(1 - |u|^2 - |v|^2{)}^2}, \quad c < 0.
\]
It would be interesting to relate their expression to the results of the
present paper. This also remains as an open problem.
\end{remark}

\section*{Appendix A}

We now give some comments on
Brada and P\'ecaut-Tison's expression of the canonical
$8$-form, showing that their $8$-form
$\omega$ (see~\cite[Def.\ 5.2]{BraPec0} or~\cite[Def.\
5.2]{BraPec}) is not $\mathrm{Spin}(9)$-invariant, and describing
some crucial gaps in their proof.

To define this form
$\omega$ they identify the space
$\mathbb{R}^{16}$ with the space
$\mathbb{O}^2$ and consider the cross product
$u\times v= \operatorname{Im}(\bar v u)=\frac12(\bar v u-\bar u v)$
of two elements
$u,v\in\mathbb{O}$ and the ``cross product'' of two vectors
$U,V\in\mathbb{O}^2$ as
\begin{equation}\label{eq.4.8}
U\times V=\bar u_1\times\bar v_1+ u_2\times v_2,
\quad\text{where}\quad
U=(u_1,u_2), V=(v_1,v_2).
\end{equation}
So the octonion
$U\times V=\operatorname{Im}(v_1\bar u_1)+\operatorname{Im}(\bar v_2 u_2)$
is pure imaginary. By Definition 5.2 in~\cite{BraPec0,BraPec} the
$8$-form
$\omega$ is given (up to a non-zero factor) by
\begin{equation}\label{eq.4.9}
\begin{split}
\omega(U_1,U_2,\dots,U_8)=2^{-7}
\sum_{\sigma\in S_8} \varepsilon(\sigma)
&[(U_{\sigma(1)}\times U_{\sigma(2)})(U_{\sigma(3)}\times
U_{\sigma(4)})]\\
\noalign{\vspace{-3mm}}
\cdot &[(U_{\sigma(5)}\times U_{\sigma(6)})(U_{\sigma(7)}
\times U_{\sigma(8)})].
\end{split}
\end{equation}
Putting
$$
a=U_1\times U_2,\quad  b=U_3\times U_4, \quad
c=U_5\times U_6,\quad  d=U_7\times U_8,
$$
as in~\cite{BraPec} we obtain that
\begin{align*}
2^4 & [(ab)(cd)+(ab)(dc)+(ba)(cd)+(ba)(dc) \\
    & \quad +(cd)(ab)+(cd)(ba)+(dc)(ab)+(dc)(ba)] \\
    & = 2^5 \operatorname{Re}[(ab)(cd)+(ab)(dc)+(ba)(cd)+(ba)(dc)] \\
    & = 2^5 \operatorname{Re}[(ab+ba)(cd+dc)] \\
    & = 2^7\operatorname{Re}(ab)\operatorname{Re}(cd),
\end{align*}
because all the elements $a,b,c,d$ are pure imaginary, so that, for
example, $\overline{(ab)(cd)}=(dc)(ba)$ and $\overline{ab}=ba$.
Remark also that in~\cite[p.\ 150]{BraPec} there is a misprint in
this formula (i.e.\ the last expression in~\cite{BraPec} is said to
be equal to $2^7 \operatorname{Re}[(ab)(cd)]$). Taking into account that by
definition the cross product in $\mathbb{O}$ and, consequently, the
``cross product''~\eqref{eq.4.8} in
$\mathbb{O}^2$ is skew-symmetric, we obtain that
\begin{equation}\label{eq.4.10}
\begin{split}
\omega(U_1,U_2,\dots,U_8)=
\sum_{\sigma\in S^*_8} \varepsilon(\sigma)
&\operatorname{Re}[(U_{\sigma(1)}\times U_{\sigma(2)})(U_{\sigma(3)}\times
U_{\sigma(4)})]\\
\noalign{\vspace{-3mm}}
\cdot &\operatorname{Re}[(U_{\sigma(5)}\times U_{\sigma(6)})(U_{\sigma(7)}
\times U_{\sigma(8)})].
\end{split}
\end{equation}
where $S^*_8=\{\sigma\in S_8: \sigma(2i-1)<\sigma(2i),
\sigma(1)<\sigma(3), \sigma(5)<\sigma(7), \sigma(1)<\sigma(5)\}$.
It is easy to verify that $\#(S^*_8)=8!/2^7=35\cdot 9$
and $\sigma(1)=1$
(its lowest number) for arbitrary $\sigma\in S^*_8$.

To prove that this form
$\omega$ is not $\mathrm{Spin}(9)$-invariant it is sufficient to show
that for the operator $I_{78}$ (which is an element of the Lie
algebra $\rho_*(\mathfrak{spin}(9))\subset\mathfrak{so}(16)$)
and some vectors $U_1,\dots U_8\in\mathbb{O}^2$ the following expression
\begin{equation}
\label{eq.4.11}
\omega(I_{78}U_1,U_2,\dotsc,U_8)
+\omega(U_1,I_{78}U_2,\dotsc,U_8)+\cdots
+\omega(U_1,U_2,\dotsc,I_{78}U_8)
\end{equation}
does not vanish.

Put $U_1=(0,u_0)$ and $U_2=(u_0,0),\dots,U_8=(u_6,0)$.
We will show that in this case the first term
$T_1$ in~\eqref{eq.4.11}
equals $63$ and that $|T_i|\leqslant 9$ for each other term
$T_i$, $i=2,\dots,8$. Since we have exactly
$7$ terms $T_i$ with $|T_i|\leqslant 9$,
the sum of all these eight terms is necessarily non-zero
if, for example, the eighth term $T_8\geqslant -8$.

Consider the first term $T_1=\omega(I_{78}U_1,U_2,\dots,U_8)$
in the sum~\eqref{eq.4.11}.
By~\eqref{eq.2.3} $I_{78}(0,u_0)=(u_7,0)$.
Since the product of any pair of elements of the basis
$B=\{u_0,\dots,u_7\}$ is an imaginary unit (up to a sign),
then each of the $35\cdot 9$ terms in
the expression~\eqref{eq.4.10} for
$\omega(I_{78}U_1,U_2,\dots,U_8)$ is given by
\begin{equation}
\label{eq.4.12}
\varepsilon(f)\varepsilon(\sigma')
\operatorname{Re}[(u_{\sigma'(1)}\bar u_{\sigma'(0)})
(u_{\sigma'(3)}\bar u_{\sigma'(2)})]
\operatorname{Re}[(u_{\sigma'(5)}\bar u_{\sigma'(4)})
(u_{\sigma'(7)}\bar u_{\sigma'(6)})],
\end{equation}
where $f$ is the unique bijection such that
$\sigma'=f\circ\sigma\circ f^{-1}$ and
$\sigma'$ is a permutation of the set $\{0,\dots,7\}$
with its natural ordering. Here
$\varepsilon(f)\varepsilon(\sigma')=\varepsilon(\sigma)$
and $\varepsilon(f)=-1$ because $f(1)=7$ and $f(i)=i-2$ for $i\geqslant 2$.
Since the product of all the elements of the basis $B$ is a real
number $\pm 1$ (see \eqref{eq.4.13} below),
then the term~\eqref{eq.4.12} is non-zero iff its first
factor of the form $\operatorname{Re}[\,\cdot\,]$
is non-zero. That is, we have $7$ possibilities for a choice
of the first pair $\{\sigma'(0),\sigma'(1)\}$ because
$\sigma'(0)=7$ ($\sigma(1)=1$) and $3$ possibilities for a choice
of the second pair $\{\sigma'(2),\sigma'(3)\}$ such
that $u_{\sigma'(2)}u_{\sigma'(3)}=\pm
u_{\sigma'(0)}u_{\sigma'(1)}$. Thus the number of non-zero
terms~\eqref{eq.4.12} equals $63$ because $\sigma(5)$
is the lowest number of the set $\{\sigma(5),\dots,\sigma(8)\}$
and then for a choice of $\sigma(6)$ one has $3$
possibilities. Remark that each such a term equals $\pm 1$
and that at least one of them is positive. This positive term
corresponds to the even permutation
$\sigma=(1, 2, 3, 8, 4, 5, 6, 7)$ with
$\sigma'=(7,0,1,6,2,3,4,5)$ because by~\eqref{eq.2.13}
\begin{align}
\operatorname{Re}[(u_0\bar u_7)(u_6\bar u_1)]
\operatorname{Re}[(u_3\bar u_2)(u_5\bar u_4)]
&=(-1)^4\operatorname{Re}[(1\cdot\mathbf{k}\mathbf{e})
(\mathbf{j}\mathbf{e}\cdot\mathbf{i})]
\operatorname{Re}[(\mathbf{k}\cdot\mathbf{j})
(\mathbf{i}\mathbf{e}\cdot\mathbf{e})] \notag \\
&=\operatorname{Re}[(\mathbf{k}\mathbf{e})
(\mathbf{k}\mathbf{e})]
\operatorname{Re}[(\mathbf{-i})(\mathbf{-i})]=1. \label{eq.4.13}
\end{align}

Now we will prove that all the non-zero terms~\eqref{eq.4.12}
coincide for any $\sigma'\in S_8$.
Taking into account the symmetries of the expression~\eqref{eq.4.12}
we can suppose that $\sigma'(0)=0$. Since all the elements
of the imaginary units set $B^0=B\setminus u_0$
anticommute and $\bar u=-u$ for such a unit, we can
rewrite the expression~\eqref{eq.4.12}
in the following form (up to a factor $\varepsilon(f)$)
$$
\phi(\sigma')=\varepsilon(\sigma')
\operatorname{Re}[(u_0u_{\sigma'(1)})
(u_{\sigma'(2)}u_{\sigma'(3)})]
\operatorname{Re}[(u_{\sigma'(4)}u_{\sigma'(5)})
(u_{\sigma'(6)}u_{\sigma'(7)})],
$$
where $\sigma'\in S_8$, $\sigma'(0)=0$. As we remarked above, this
expression is not zero iff its first factor $\operatorname{Re}[\,\cdot\,]$
is not zero. In this case
the algebra generated by the three imaginary units $u_{\sigma'(1)},
u_{\sigma'(2)},u_{\sigma'(3)}$ is isomorphic to the quaternion
algebra $\mathbb{H}$. In particular, the imaginary unit $u_{\sigma'(4)}$
is orthogonal to these three vectors and
$u_{\sigma'(3)}=\varepsilon_{12}u_{\sigma'(1)}u_{\sigma'(2)}$.
Therefore (\cite[Lect.\ 15, Lemma 1]{Pos})
there exists an automorphism $\Phi$ of $\mathbb{O}$
such that $\Phi(u_{\sigma'(1)})=u_1$, $\Phi(u_{\sigma'(2)})=u_2$,
and $\Phi(u_{\sigma'(4)})=u_4$. Then
$\Phi(u_{\sigma'(3)})=\varepsilon_{12} u_3$.
It is easy to see that $\Phi$ preserves
the set $B^0\cup(-B^0)$ and, consequently,
$\Phi(u_k)=\varepsilon_{u_k}^\Phi\sigma^\Phi(u_k)$, where
$\varepsilon_{u_k}^\Phi=\pm 1$ and $\sigma^\Phi$ is some
permutation in $S_8$ preserving $u_0$, and
$\prod_{k=0}^{7}\varepsilon_{u_k}^\Phi\cdot\varepsilon(\sigma^\Phi)=1$
(see the proof of Lemma~\ref{le.2.2}).
Thus
\begin{multline*}
\varepsilon(\sigma')\,
[(u_0u_{\sigma'(1)})
(u_{\sigma'(2)}u_{\sigma'(3)})]\cdot
[(u_{\sigma'(4)}u_{\sigma'(5)})
(u_{\sigma'(6)}u_{\sigma'(7)})] \\
= \varepsilon(\sigma'')
[(u_0u_1)(u_2u_3)]\cdot
[(u_4u_{\sigma''(5)})
(u_{\sigma''(6)}u_{\sigma''(7)})],
\end{multline*}
where $\sigma''=\sigma^\Phi\sigma'\in S_8$, because
$\varepsilon(\sigma'')=\varepsilon(\sigma')\varepsilon(\sigma^\Phi)=
\varepsilon(\sigma')\prod_{k=0}^{7}\varepsilon_{u_k}^\Phi$.
Note also that $\sigma''(j)=j$ if $j=0,1,2,3,4$ and
$\sigma''(j)\in\{5,6,7\}$ for $j=5,6,7$.
Since all the expressions in square brackets
are real and $\mathbf{i}\cdot\mathbf{j}\cdot\mathbf{k}=-1$, we have
$$
\phi(\sigma')=-\varepsilon(\sigma'')\cdot
[(u_4u_{\sigma''(5)})(u_{\sigma''(6)}u_{\sigma''(7)})].
$$
But $u_{\sigma''(4+i)}=u_{\tilde\sigma(i)}u_4$, $i=1,2,3$,
where $\tilde\sigma$ is some permutation in $S_3$. It is clear
that $\varepsilon(\sigma'')=\varepsilon(\tilde\sigma)$. Since
$(q_1\mathbf{e})(q_2\mathbf{e})=-\bar{q}_2q_1$
by~\eqref{eq.2.13}, we obtain that
$$
\phi(\sigma')=-\varepsilon(\tilde\sigma)
(-\bar u_{\tilde\sigma(1)})
(-\bar u_{\tilde\sigma(3)}u_{\tilde\sigma(2)})=
\varepsilon(\tilde\sigma)
u_{\tilde\sigma(1)}u_{\tilde\sigma(2)}u_{\tilde\sigma(3)}.
$$
Since the imaginary units $u_1,u_2,u_3$ anticommute, then the
non-zero value
$\phi(\sigma')$ $=\mathbf{i}\cdot\mathbf{j}\cdot\mathbf{k}=-1$
is independent of $\sigma'\in S_8$ and, consequently,
$\omega(I_{78}U_1,U_2,\dots,U_8)$ $=63$. Remark here that
the value $T_1=\omega(I_{78}U_1,U_2,\dots,U_8)$ is
calculated in~\cite[p.\ 150]{BraPec} but with a mistake.
By their calculations $T_1=35\cdot 9$ because the
calculations are based on the $\mathrm{SO}(8)\subset\mathrm{Spin}(9)$
invariance of the form $\omega$ given by~\eqref{eq.4.9}.
But as we will prove this form is not $\mathrm{Spin}(9)$-invariant.

Consider now the $i$th term
$T_i=\omega(U_1,\dots,I_{78}U_i,\dots)$, $2\leqslant i\leqslant 8$,
in~\eqref{eq.4.11}. By~\eqref{eq.2.3} for $0\leqslant k\leqslant
6$, one has $I_{78}(u_k,0)=(0,\pm u_{k'})$ with $1\leqslant
k'\leqslant 7$. Since the ``cross product'' $(x,0)\times(0,y)=0$
for any $x,y\in \mathbb{O}$ and $\sigma(1)=1$, $U_1=(0,u_0)$, then each
non-zero term in the expression~\eqref{eq.4.10} for
$\omega(U_1,\dots,I_{78}U_i,\dots)$ is determined by $\sigma\in
S^*_8$ such that $\sigma(2)=i$. This term is given by the following
expression
\begin{equation}
\label{eq.4.14}
\phi(\sigma)=\varepsilon(\sigma)
\operatorname{Re}[(\mp u_{(i-2)'}u_0)
(u_{\varphi(2)}\bar u_{\varphi(1)})]
\operatorname{Re}[(u_{\varphi(4)}\bar u_{\varphi(3)})
(u_{\varphi(6)}\bar u_{\varphi(5)})],
\end{equation}
where the six-point set $\{\varphi(1),\dots,\varphi(6)\}$
coincides with the set  $\{0,1,\dots,6\}\setminus\{i-2\}$.
If the term~\eqref{eq.4.14} is non-zero
then the first factor of the form
$\operatorname{Re}[\,\cdot\,]$ in~\eqref{eq.4.14} is non-zero.
That is, we have at most $3$ possibilities for a choice
of the second pair $\{\varphi(1),\varphi(2)\}$ because if
$\phi(\sigma)\not=0$ then
$u_{\varphi(2)}\bar u_{\varphi(1)}=\pm u_{(i-2)'}\subset B^0\cup(-B^0)$.
Thus the number of non-zero
terms~\eqref{eq.4.14} equals at most $9$ because $\sigma(5)$
is the lowest number of the set $\{\sigma(5),\dots,\sigma(8)\}$
and then for a choice of $\sigma(6)$ one has $3$
possibilities. Remark that each such a non-zero term equals
$\pm 1$.

Now to prove the non-invariance of the form $\omega$ it is
sufficient to find one positive term in the expression
for $T_8$. This positive term corresponds to the even
permutation $\sigma=(1, 8, 2, 3, 4, 5, 6, 7)$
because, by~\eqref{eq.2.13},
\begin{align*}
&\operatorname{Re}[(U_1\times I_{78}U_8)(U_2\times U_3)]\cdot\operatorname{Re}[(U_4\times U_5)(U_6\times U_7)] \\
& \qquad = \operatorname{Re}[((0,u_0)\times (0,u_6u_7))((u_0,0)\times(u_1,0))] \\
& \qquad \quad \cdot\operatorname{Re}[((u_2,0)\times(u_3,0))((u_4,0)\times(u_5,0))] \\
& \qquad = \operatorname{Re}[(u_7u_6\cdot u_0)(u_1\bar u_0)]\operatorname{Re}[(u_3\bar u_2)(u_5\bar u_4)] \\
& \qquad = \operatorname{Re}[(\mathbf{k}\mathbf{e}\cdot\mathbf{j}\mathbf{e})\mathbf{i}]
\operatorname{Re}[(\mathbf{k}\mathbf{j})(\mathbf{i}\mathbf{e}\cdot\mathbf{e})]
=1.
\end{align*}

Thus the $8$-form $\omega$ proposed in~\cite{BraPec0} and~\cite{BraPec}
is not $\mathrm{Spin}(9)$-invariant.

\begin{remark}
Using the method described above one can show that
only $T_2=-9$ and that all the other terms
$T_i=9$ for $i=3,\dots,8$.
Thus the expression~\eqref{eq.4.11} equals $108$.
\end{remark}

Note also that the proof of the invariance of the form
$\omega$ in~\cite{BraPec} contains some gaps.

First of all this proof is based on
the wrong proposition~\cite[Prop.\ 5]{BraPec}.
The proof of this proposition relies in turn on the fact
that the orthogonal transformations $T_a\colon \mathbb{O}\to\mathbb{O}$,
$x\mapsto axa$, of the space $\mathbb{O}$, where $a\in \operatorname{Im}\mathbb{O}$, $a^2=-1$, are pure
imaginary octonions of length $1$, generate a
group $G_T$ isomorphic to $\mathrm{SO}(8)$~(cf.\ \cite[p.151]{BraPec}).
But this is impossible because $T_a(u_0)=-u_0$ so that for any
$g\in G_T$ we have $g(u_0)=\pm u_0$.
Thus $G_T$ is locally isomorphic to $\mathrm{SO}(7)$ so that
$G_T\not\cong\mathrm{SO}(8)$.

Moreover, Prop.\ $5$ in~\cite{BraPec} asserts that the group
$G^*$ generated by certain one-parameter
subgroup and by the orthogonal transformations
$\tilde T_a\colon \mathbb{O}^2$
$\to\mathbb{O}^2$,
$(x_1,x_2)\mapsto(ax_1,x_2a)$, where
$a\in \operatorname{Im}\mathbb{O}$,
$a^2=-1$, are pure imaginary octonions of length
$1$, is isomorphic to the group
$\mathrm{Spin}(9)$. Now remark that by~\eqref{eq.4.10} their
$8$-form is
$\omega=\omega'\land\omega'$, i.e.\ it is the square of the
$4$-form $\omega'$ given by
$$
\omega'(U_1,U_2,U_3,U_4)=
\sum_{\sigma\in S_4} \varepsilon(\sigma) \operatorname{Re}[(U_{\sigma(1)}
\times U_{\sigma(2)})(U_{\sigma(3)}\times U_{\sigma(4)})].
$$
In~\cite[p.\ 152]{BraPec} it is proved that this
$4$-form $\omega'$ is $G^*$-invariant.
But we know (Brown and Gray \cite[Sect.\ 4.5]{BroGra}) that such a
non-zero $\mathrm{Spin}(9)$-invariant $4$-form cannot exist,
so that $G^*\not\cong\mathrm{Spin}(9)$.

\section*{Appendix B}

We now comment on Abe and Matsubara's expression of $\Omega^8$.
Remark first of all that using some computer calculations
we can obtain the expression for our $\mathrm{Spin}(9)$-invariant $8$-form
in some natural basis of $\mathbb{O}^2$. This expression contains
$702$ terms.

Abe and Matsubara attempted to
describe this $702$-terms expression for
$\Omega^8$ in their paper~\cite{AbeMat}
(see also the short announce by Abe~\cite{Abe}). The form $\Omega^8$
is exhibited there as a sum of eight $8$-forms
$\Omega^8_1,\dotsc,\Omega^8_8$. The combinatorial
descriptions of these forms given in~\cite{AbeMat}
are based on certain two $7\times8$
integer-valued matrices. But the combinatorial definitions of
these eight $8$-forms contain some mistakes,
for example the definition of the form
$\Omega^8_8$~(see \cite[p.\ 8]{AbeMat}) is not correct. Moreover,
the papers~\cite{Abe} and~\cite{AbeMat} contain different
expressions for the aforementioned form
$\Omega^8_8$. The expression given in~\cite{Abe} contains at most
$7\cdot7\cdot4=196$ terms (in some canonical basis)
though it is asserted in~\cite[p.12]{AbeMat} that
$\Omega^8_8$ contains
$336$ terms. Therefore we can not compare Abe-Matsubara's formula
and our formula for the canonical form
$\Omega^8$.

{\small

\providecommand{\bysame}{\leavevmode\hbox to3em{\hrulefill}\thinspace}

\noindent {\bf Authors's addresses}

\smallskip

\noindent M.C.L.: \!ICMAT\! (CSIC-UAM-UC3M-UCM), Departamento de
Geometr\'\i a y Topolog\'\i a, Facultad de Matem\'aticas,
Universidad Complutense de Madrid, Avenida Complutense s/n,
28040--Madrid, Spain. {\it E-mail\/}: {\tt mcastri@mat.ucm.es}

\smallskip

\noindent P.M.G.: Institute of Fundamental Physics,
CSIC, Serrano 113--bis, 28006--Madrid, Spain. {\it E-mail\/}: {\tt
pmgadea@iec.csic.es}

\smallskip

\noindent I.V.M.: Institute for Applied Problems of Mechanics
and Mathematics, National Academy of Sciences of Ukraine, Naukova
st., 3b, 79060--L'viv, Ukraine. {\it E-mail\/}: {\tt
mykytyuk\_i@yahoo.com}
}

\end{document}